\documentclass[12pt]{amsart}
\usepackage{mathrsfs}
\usepackage{amsfonts}


\usepackage{amsmath, amsthm, amsfonts, amssymb}
\setlength{\topmargin}{0cm} \setlength{\oddsidemargin}{0cm}
\setlength{\evensidemargin}{0cm} \setlength{\textwidth}{17truecm}
\setlength{\textheight}{21.0truecm}

\newtheorem{thm}{Theorem}[section]
\newtheorem{cor}[thm]{Corollary}
\newtheorem{lem}[thm]{Lemma}
\newtheorem{prop}[thm]{Proposition}
\newtheorem{claim}[thm]{Claim}
\newtheorem{example}[thm]{Example}

\theoremstyle{definition}

\def\Label#1{\label{#1}}

\numberwithin{equation}{section} \theoremstyle{remark}
\newtheorem{rem}{Remark}[section]
\def\<{\langle}
\def\>{\rangle}

\def\ra{\rightarrow}
\def\p{\partial}
\def\a{\alpha}
\def\owt{o_{wt}}

\def\l{{\lambda}}

\def\HH{{\mathbb H}}

\def\-{\overline}

\def\h{\hbox}
\def\d{\delta}

\def\d{\delta}
\def\d{\delta}

\def\a{\alpha}

\def\B{{\mathbb B}}
\def\CP{{\mathbb {CP}}}
\def\bP{{\mathbb {CP}}}
\def\CC{\mathbb C}
\def\bC{\mathbb C}
\def\RR{\mathbb  R}

\def\bH{\mathbb  H}

\def\M*{\wt{M^*}}

\def\-{\overline}

\def\h{\hbox}

\def\wt{\widetilde}
\def\ra{\rightarrow}

\def\d{\delta}

\def\a{\alpha}
\def\d{\delta}

\hsize=7.5truein
\vsize=8.6truein

\def\d{\delta}

\def\a{\alpha}

\def\im{\text{{\rm Im }}}


\def\beq{\begin{equation}}
\def\nneq{\end{equation}}
\def\beqn{\begin{eqnarray}}
\def\neqn{\end{eqnarray}}
\def\beqna{\begin{eqnarray*}}
\def\neqna{\end{eqnarray*}}
\def\bedis{\begin{displaymath}}
\def\nedis{\end{displaymath}}
\def\-{\overline}
\begin{document}

\bigskip

\title[Holomorphic Mappings between Hyperquadrics]{Holomorphic Mappings between Hyperquadrics with Small  Signature Difference}

\author[M. S. Baouendi, P. Ebenfelt, X. Huang]
{M. Salah Baouendi, Peter Ebenfelt, Xiaojun Huang}
\footnotetext{{\rm The authors are supported in part by DMS-0701070,
DMS-0701121 and DMS-0801056, respectively\newline}}
\address{M. S. Baouendi, P. Ebenfelt: Department of Mathematics, University of
California at San Diego, La Jolla, CA 92093-0112, USA}
\email{sbaouendi@ucsd.edu, pebenfel@math.ucsd.edu}
\address{X. Huang:
Department of Mathematics, Rutgers University - Hill Center for the
Mathematical Sciences 110 Frelinghuysen Rd., Piscataway, NJ
08854-8019,USA.}
 \email{huangx@math.rutgers.edu}

\begin{abstract}
In this paper, we  study  holomorphic mappings sending a
hyperquadric of signature $\ell$ in $\bC^n$ into a hyperquadric of
signature $\ell'$ in $\bC^N$. We show (Theorem \ref{main}) that if
the signature difference $\ell'-\ell$ is not too large, then the
mapping can be normalized by automorphisms of the target
hyperquadric to a particularly simple form and, in particular, the
image of the mapping is contained in a complex plane of a dimension
that depends only on $\ell$ and $\ell'$, and not on the target
dimension $N$. We also prove a Hopf Lemma type result (Theorem
\ref{main2}) for such mappings.
\end{abstract}


\vspace{3cm}
\maketitle


\section{Introduction}

In a recent  paper of the first and the third authors \cite{BH},  it
was shown that a holomorphic mapping $U\to \bC^N$, where $U$ is an
open connected subset of $\bC^n$ and  $N\geq n$, sending a piece of
a real hyperquadric with positive signature into a real hyperquadric
with the same signature either possess the super-rigidity and the CR
transversality (or Hopf lemma) properties or sends the whole open
neighborhood $U$ into the target hyperquadric. The super-rigidity
phenomenon obtained in \cite{BH} contrasts with the rigidity of
holomorphic mappings between  Heisenberg hypersurfaces (i.e.\
hyperquadrics with $0$-signature) in complex spaces of different
dimensions, which holds only when the difference in dimension is
small. (See \cite{HJ} for a survey on this matter.)
 The result obtained in \cite{BH}  is more along the lines of the behavior of holomorphic mappings between bounded symmetric domains of rank at least two (see \cite{Mok}).
In this paper, we  study  holomorphic mappings between hyperquadrics
with different signatures. We will show that both a suitably
reinterpreted rigidity phenomenon and a  weaker notion of the Hopf
lemma property hold when the difference between the signatures is
not too large; see Theorems \ref{main} and \ref{main2} for the
precise formulation.

To state our main result, we first recall some notation and
definitions. For $0\le \ell \le n-1$, we define the generalized Siegel
upper-half space
\begin{equation}\Label{snl}
{\mathbb S}^{n}_{\ell}\colon
=\big\{(z,w)=(z_1,\cdots,z_{n-1},w)\in {\CC}^n: \ \ w=u+iv,\\ \
v>-\sum_{j=1}^{\ell}|z_j|^2+ \sum_{j=\ell+1}^{n-1}|z_j|^2\big\},
\end{equation}
where the first sum is understood to be $0$ if $\ell=0$. The
boundary of ${\mathbb S }^{n}_{\ell}$ is the standard hyperquadric
\begin{equation}\Label{hnl}
{\mathbb H}^{n}_{\ell}:=\big\{(z,w)=(z_1,\cdots,z_{n-1}, w)\in
{\CC}^n: \ \ w=u+iv,
\\ v=-\sum_{j=1}^{\ell}|z_j|^2+
\sum_{j=\ell+1}^{n-1}|z_j|^2\big\}.
\end{equation}
 If  $0<\ell<n-1$, it is
well known that any CR function defined over a connected  open piece
$M$ of ${\mathbb H}^n_\ell$ extends to a holomorphic function in a
neighborhood of $M$ in ${\CC}^n$ (see e.g. \cite {BER}). We denote
by $\hbox{Aut}_{0}({\mathbb H}_\ell^n)$  the stability group of ${\mathbb
H}_\ell^n$ at 0, i.e.\ the group of
local biholomorphisms of ${\CC}^n$ sending $0$ to itself and a piece of ${\mathbb
H}_\ell^n$ near the origin into ${\mathbb
H}_\ell^n$. We point out that if  $0\leq \ell\leq (n-1)/2$, then $\ell$ is the
signature  of the hyperquadric ${ \mathbb H}^{n}_{\ell}$.  (By the signature of a
connected Levi nondegenerate hypersurface, we mean the mininum of the number of
positive and negative eigenvalues of a representative of the Levi form at any point.)
In what follows, we shall mainly consider this case.

Our main result in this paper concerns holomorphic mappings $F$
defined in an open connected neighborhood $U$ of $0$ in $\bC^n$,
valued in $\bC^N$, and sending $\mathbb H^n_\ell\cap U$ into
$\bH^N_{\ell'}$. Using the coordinates $(z,w)\in \bC^{n-1}\times
\bC$ and $(z^*,w^*)\in \bC^{N-1}\times \bC$, we shall write the
components of $F$ in the form $z^*=\widetilde f(z,w)$, $w^*=g(z,w)$.
We remark that it is easy to see that the derivative $\partial
g/\partial w(0)$ is a real number. Using the notation above, we can
now state our first main result as follows.

\begin{thm}\Label{main} Let $F$ be a holomorphic map from an open connected
neighborhood $U$ of $0 \in \mathbb{C}^n$ into $\mathbb{C}^N$ with
$1<n< N$ and $ F(0)=0$. Assume that $F$ maps
$\mathbb{H}^n_\ell\cap U$ into ${\mathbb{H}}^N_{\ell'}$, with $\ell \leq (n-1)/2$ and $ \ell'\leq
(N-1)/2.$ Then, the following hold:
\medskip

\noindent {\rm (a)} If $\partial g/\partial w(0)> 0$, then
$\ell\leq \ell'$ and $n-\ell\leq N-\ell'$. Moreover, if
$\ell'<2\ell$, then there is $\gamma \in Aut_0(\mathbb{H}^N_{\ell'})$
such that
\begin{equation}\Label{normgeq0}
\gamma \circ F  (z,w)=(z_1,\cdots,
z_\ell,\psi(z,w),z_{\ell+1},\cdots,z_{n-1}, \psi(z,w),0,\ldots,0,w),
\end{equation}
where $\psi=(\psi_1,\cdots,\psi_{\ell'-\ell})$ is
holomorphic near $0$ (with the understanding that this term is not present when $\ell'=\ell$). \medskip

\noindent {\rm (b)} If $\partial g/\partial w(0)< 0$, then
$\ell'\geq n-1-\ell$ and $N-1-\ell'\geq \ell$. Moreover, if $
\ell'<n-1$, then there is $\gamma \in Aut_0(\mathbb{H}^N_{\ell'})$
such that
\begin{equation}\Label{normleq0}
\gamma \circ F(z,w)=(z_{\ell+1},\cdots,z_{n-1},\psi(z,w), z_1,\cdots,
z_\ell,\psi(z,w),0,\ldots,0,-w),
\end{equation}
where $\psi=(\psi_1,\cdots,\psi_{\ell'-(n-1-\ell)})$ is
holomorphic near $0$ (with the understanding that this term is not present when $\ell'=n-1-\ell$).
\end{thm}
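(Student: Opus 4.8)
The plan is to analyze the mapping $F$ at the level of the defining equations. Write the hyperquadric condition as $\im w = -\sum_1^\ell |z_j|^2 + \sum_{\ell+1}^{n-1}|z_j|^2$, and similarly for the target; introducing the Hermitian forms $\langle Z, Z\rangle_\ell$ of the indicated signature, the basic identity is
\begin{equation}
\im g(z,w) = -\langle \widetilde f(z,w), \widetilde f(z,w)\rangle_{\ell'} \quad \text{whenever } \im w = -\langle z,z\rangle_\ell,
\end{equation}
and by polarization (using that CR functions extend holomorphically, as noted in the excerpt for $0<\ell<n-1$) this becomes a holomorphic identity
\begin{equation}
\frac{g(z,w) - \overline{g(\chi,\tau)}}{2i} = -\langle \widetilde f(z,w), \overline{\widetilde f(\chi,\tau)}\rangle_{\ell'}, \qquad w - \overline{\tau} = 2i\,\langle z, \overline\chi\rangle_\ell .
\end{equation}
First I would treat case (a), where $\partial g/\partial w(0) =: \lambda > 0$. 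After composing with automorphisms fixing $0$ (dilations, Heisenberg translations, and the linear automorphisms of the target coming from $U(\ell', N-1-\ell')$), one normalizes so that $g(z,w) = w + o(|w|)$ and the linear part of $\widetilde f$ is adapted to the signature splitting. The expected conclusion $\ell \le \ell'$, $n-\ell \le N-\ell'$ should drop out of counting the positive and negative "directions" that the Hermitian identity forces $\widetilde f$ to realize: restricting the identity to the diagonal $\chi = z$, $\tau = \bar w$ where $w$ is real, one gets $\im g = -\langle \widetilde f, \widetilde f\rangle_{\ell'}$ on $\{w \text{ real}\}$ modulo the CR constraint, and comparing the induced forms yields the inequalities on signatures and coranks.

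The heart of the argument is the rigidity statement \eqref{normgeq0} under the extra hypothesis $\ell' < 2\ell$. I would expand $F$ to second order and use the hyperquadric identity order by order in the weighted sense (giving $z$ weight $1$ and $w$ weight $2$, as the macro \owt\ in the preamble suggests the authors do). The weight-$2$ part of the identity forces the linear part of $\widetilde f$, after a target automorphism, to be $(z_1,\dots,z_\ell, *, z_{\ell+1},\dots,z_{n-1}, *, 0,\dots,0)$ with the $w$-component linear part equal to $w$; here is where $\ell \le \ell'$ and $n-\ell\le N-\ell'$ are used to guarantee there is room. The key point is then to show the higher-order terms can be killed: the condition $\ell' < 2\ell$ limits how many independent "new" negative directions the quadratic terms of $\widetilde f$ could occupy — a cross term $z_j z_k$ with $1\le j,k\le \ell$ would have to be absorbed by a component of $\widetilde f$ of negative type, and there are fewer than $\ell$ such components available ($\ell'-\ell < \ell$), which forces those quadratic terms to vanish. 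A Cauchy–Schwarz / rank argument on the Hermitian form, of the type used in the Huang lemma on proper maps between balls, should then propagate this to all higher orders and pin down $\widetilde f$ and $g$ exactly, so that $\gamma \circ F$ has the stated form with only the genuinely free components $\psi$ surviving.

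Case (b), with $\lambda < 0$, I would reduce to case (a) by precomposing with the automorphism $(z,w) \mapsto (z_{\ell+1},\dots,z_{n-1}, z_1,\dots,z_\ell, -w)$ of $\bC^n$, which maps $\mathbb H^n_\ell$ to itself while swapping the roles of the positive and negative $z$-directions and reversing $w$; under this substitution $\partial g/\partial w(0)$ changes sign, and the normal form \eqref{normgeq0} transported back gives exactly \eqref{normleq0}, with $n-1-\ell$ playing the role of $\ell$ and the condition $\ell' < n-1$ playing the role of $\ell' < 2\ell$ (note $2(n-1-\ell) \ge n-1$ since $\ell \le (n-1)/2$). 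The main obstacle I anticipate is the inductive/propagation step in part (a): showing that once the linear and quadratic parts are normalized, no higher-degree terms can appear outside the $\psi$ slots. This requires carefully exploiting the gap $2\ell - \ell' > 0$ together with the constraint $\ell' \le (N-1)/2$ in the Hermitian identity — essentially a signature-counting refinement of the classical rigidity lemmas — and getting the bookkeeping right across the two blocks of $z$-variables (indices $1,\dots,\ell$ versus $\ell+1,\dots,n-1$) is where the delicate work lies.
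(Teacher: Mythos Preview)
Your overall strategy matches the paper's: normalize by target automorphisms, expand in the weighted sense, use the Huang-type lemmas to kill terms order by order, and reduce (b) to (a) by a coordinate swap. The signature inequalities and the quadratic-level analysis are also handled the same way the paper handles them (via Lemma~\ref{LemmaII}, using $\tau:=\ell'-\ell<\ell$).

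However, there is a genuine gap in the propagation step, and it is not merely bookkeeping. You write that ``a Cauchy--Schwarz / rank argument \ldots\ should then propagate this to all higher orders,'' but the paper's proof shows this is precisely where the real work lies. The induction (Proposition~\ref{mainprop}) is a two-step procedure (odd weight $t=2s^*-1$, then even weight $t=2s^*$), and the even step does \emph{not} close using only the Huang lemma and D'Angelo's lemma applied at the origin. After reducing via Corollary~\ref{CorI} and Lemma~\ref{LemmaII}, one is left with an ambiguity of the form $f^{(2s^*-1)}(z,w)=a^{(1)}(z)w^{s^*-1}$, $g^{(2s^*)}(z,w)=d^{(0)}w^{s^*}$ with $a_j^{(1)}(z)=d^{(0)}z_j$ (Claim~\ref{mainclaim}), which the Hermitian identity at the base point $0$ cannot exclude. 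To kill this residual term the paper invokes the \emph{moving-point trick}: one renormalizes $F$ at nearby base points $p\in\mathbb H^n_\ell$, observes that the same conclusion holds for each $F_p^{**}$, and then reads off from the $p$-dependence of the normalization data that $a^{(1)}\equiv0$ and $d^{(0)}=0$ (Lemma~\ref{mainlem2}). This is an essential idea that your sketch omits; without it the induction stalls at the first even step beyond weight~4.

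A smaller correction for part (b): the map $(z,w)\mapsto(z_{\ell+1},\ldots,z_{n-1},z_1,\ldots,z_\ell,-w)$ does \emph{not} send $\mathbb H^n_\ell$ to itself; it sends $\mathbb H^n_{\ell^*}$ with $\ell^*:=n-1-\ell$ to $\mathbb H^n_\ell$ (equivalently, $F\circ\sigma^*$ maps $\mathbb H^n_{\ell^*}$ into $\mathbb H^N_{\ell'}$). You seem aware of this since you correctly identify $n-1-\ell$ as the new ``$\ell$'' and check $\ell'<n-1\le 2\ell^*$, but note that now $\ell^*\ge (n-1)/2$, so one must also verify that Lemmas~\ref{LemmaI} and~\ref{LemmaII} and the normalization from \cite{BH} remain valid with this larger $\ell^*$; the paper does this explicitly.
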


We would like to make a few remarks.

\begin{rem}\Label{mainrem1}  In the notation of Theorem \ref{main},
if $\partial g/\partial w(0)\neq 0$ and $\ell'<n-1-\ell$, then by
part (b) of the theorem, necessarily $\partial g/\partial w(0)> 0$.
Thus, if $\partial g/\partial w(0)\neq 0$, $\ell'<n-1-\ell$, and
$\ell'<2\ell$, then the conclusion  in Theorem \ref{main} (a) holds.
\end{rem}

Recall that a
holomorphic mapping $F\colon U\subset \bC^n\to \bC^N$ sending a real
hypersurface $M\subset \bC^n$ into a real hypersurface $M'\subset
\bC^N$ is said to be CR transversal to $M'$ at $p\in M$ if
$$
T^{1,0}_{F(p)} M'+dF(T^{1,0}_pM)=T^{1,0}_{F(p)}\bC^N.
$$
It is well known and not difficult to see that a holomorphic mapping
$F$ as in Theorem \ref{main} is CR transversal to $\mathbb
H^N_{\ell'}$ at $0$ if and only if $\partial g/\partial w(0)\neq 0$. Moreover, if
$\partial g/\partial w(0)\neq 0$, then $\partial g/\partial w(0)>0$ if and only if there is a small open neighborhood $V$ of 0 in $\bC^n$ such that $F(\mathbb S^n_{\ell}\cap V)\subset \mathbb S^N_{\ell'}$. Similarly, if
$\partial g/\partial w(0)\neq 0$, then $\partial g/\partial w(0)<0$ if and only if there is a small open neighborhood $V$ of 0 in $\bC^n$ such that $F(\mathbb S^n_{\ell}\cap V)\subset \bC^N\setminus \overline{\mathbb S^N_{\ell'}}$.

Also, recall that if $M'$ is a real-analytic hypersurface in
$\bC^N$, defined near a point $p'\in M'$ by a real-analytic defining
equation $\rho'(Z,\bar Z)=0$, then for $q'$ near $p'$ the Segre
variety $Q'_{q'}$ of $M'$ at $q'$ is the complex manifold defined by
the holomorphic equation $\rho'(Z,\bar q')=0$. In particular, the Segre variety
of
$\mathbb H^N_{\ell'}$ at $0$ is given by $Q'_{0}=\{(z^*,w^*)\colon
w^*=0\}$. Hence, in the notation of Theorem \ref{main}, $F$ sends $U$ into $Q'_0$ if and only if $g\equiv 0$. We now state our second main result of this paper.

\begin{thm}\Label{main2} Let $F$ be a holomorphic map from an open
neighborhood $U$ of $0 \in \mathbb{C}^n$ into $\mathbb{C}^N$ with
$1<n< N$ and $ F(0)=0$. Assume that $\mathbb{H}^n_\ell\cap
U$ is connected and $F$ maps $\mathbb{H}^n_\ell\cap
U$ into ${\mathbb{H}}^N_{\ell'}$, with $\ell \leq (n-1)/2$ and $
\ell'\leq (N-1)/2.$ Assume that   $\partial
g/\partial w(0)=0$. Then, the following hold:
\medskip

\noindent {\rm (a)} If there is a point $p\in \mathbb H^n_\ell\cap U$ and an open neighborhood $V$ of $p$ in $U$ such that
$F(\mathbb S^n_{\ell}\cap V)\subset \mathbb S^N_{\ell'}$ and $\ell'<2\ell$, then $g\equiv 0$.
\medskip

\noindent {\rm (b)} If there is a point $p\in \mathbb H^n_\ell\cap
U$ and an open neighborhood $V$ of $p$ in $U$ such that $F(\mathbb
S^n_{\ell}\cap V)\subset\bC^N\setminus \overline{ \mathbb
S^N_{\ell'}}$ and $\ell'<n-1$, then $g\equiv 0$.
\end{thm}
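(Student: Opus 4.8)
The plan is to reduce Theorem \ref{main2} to Theorem \ref{main} by a perturbation/deformation argument in the normal variable $w$. The hypothesis of Theorem \ref{main2} is precisely the failure of CR transversality at $0$, i.e.\ $\partial g/\partial w(0)=0$, which is the borderline case not covered by Theorem \ref{main}. Since $\partial g/\partial w(0)$ is real (as noted in the excerpt), for $t\in\bR$ small consider the map $F_t(z,w):=F(z,w)+(0,\ldots,0,tw)$, valued in $\bC^N$. A direct computation with the defining equation of $\bH^N_{\ell'}$ shows that $F$ sending $\bH^n_\ell\cap U$ into $\bH^N_{\ell'}$ together with $\partial g/\partial w(0)=0$ makes each $F_t$ again send $\bH^n_\ell\cap U$ into $\bH^N_{\ell'}$ for all small real $t$ (the added term $tw$ contributes $t(w-\bar w)/(2i)\cdot(\text{something})$ in a way that cancels against the vanishing of $\partial g/\partial w(0)$; this is the only real computation needed). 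Moreover, $\partial g_t/\partial w(0)=t$, so for $t>0$ we land in case (a) of Theorem \ref{main} and for $t<0$ in case (b).

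Next I would use the geometric reinterpretation recorded right before the statement: $\partial g/\partial w(0)>0$ is equivalent to $F$ mapping $\mathbb S^n_\ell\cap V$ into $\mathbb S^N_{\ell'}$ for some small $V$, and $\partial g/\partial w(0)<0$ to $F$ mapping $\mathbb S^n_\ell\cap V$ into $\bC^N\setminus\overline{\mathbb S^N_{\ell'}}$. Under the hypothesis of part (a), $F(\mathbb S^n_\ell\cap V)\subset\mathbb S^N_{\ell'}$; I claim this side condition is inherited by $F_t$ for $t>0$ small, because adding $(0,\ldots,0,tw)$ with $w\in\mathbb S^n_\ell$ (so $\im w>0$ near $p$ after the relevant normalization, or more precisely $w$ lies on the correct side of the Segre data) only pushes the image further into $\mathbb S^N_{\ell'}$; one should verify this by writing out the defining inequality. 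Hence $\partial g_t/\partial w(0)=t>0$ is consistent, and Theorem \ref{main}(a) applies to $F_t$ when $\ell'<2\ell$: there is $\gamma_t\in\mathrm{Aut}_0(\bH^N_{\ell'})$ with $\gamma_t\circ F_t$ in the normal form \eqref{normgeq0}. In that normal form the last component is exactly $w$, hence $g_t$ — the $w^*$-component of $\gamma_t\circ F_t$ — is not identically zero; but I want information about $g$, not $g_t$. The cleaner route is: apply Theorem \ref{main}(a) to conclude $\ell\le\ell'$ and $n-\ell\le N-\ell'$ hold for $F_t$, which are dimension inequalities independent of $t$, and then let $t\to0^+$ and argue that the normal-form data converge. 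The limiting normal form for $F=F_0$ would have last component $g\equiv0$ exactly when $\partial g/\partial w(0)=0$, which is our hypothesis.

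Actually the sharpest formulation avoids limits: I would show directly that if $g\not\equiv0$ then $\partial g/\partial w(0)\neq0$, contradicting the hypothesis. To do this, suppose $g\not\equiv0$; using the basic identity coming from $F(\bH^n_\ell)\subset\bH^N_{\ell'}$ (polarize the defining equation and restrict to the Segre variety $Q'_0=\{w^*=0\}$, or equivalently use the Segre-variety remark preceding the theorem), together with the side condition $F(\mathbb S^n_\ell\cap V)\subset\mathbb S^N_{\ell'}$, one gets that $\im g\ge0$ on $\mathbb S^n_\ell$ near $p$ with $g\not\equiv0$; then a Hopf-lemma/boundary-uniqueness argument on the real hypersurface $\bH^n_\ell$ (this is why the result is called a Hopf Lemma type result) forces the normal derivative $\partial g/\partial w$ to be nonzero at boundary points, and by connectedness of $\bH^n_\ell\cap U$ and the reality of $\partial g/\partial w(0)$, nonzero at $0$ — contradiction. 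Part (b) is handled symmetrically, using that $F(\mathbb S^n_\ell\cap V)\subset\bC^N\setminus\overline{\mathbb S^N_{\ell'}}$ gives $\im g\le0$, the condition $\ell'<n-1$ being the one that makes case (b) of Theorem \ref{main} available, and the Hopf lemma applied to $-g$.

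The main obstacle I anticipate is making the Hopf-type boundary estimate rigorous in the indefinite-signature setting: for $\ell=0$ this is the classical Hopf lemma for subharmonic-type functions, but for $\ell>0$ the hypersurface $\bH^n_\ell$ is not the boundary of a pseudoconvex domain, so one cannot invoke plurisubharmonicity. The right substitute is likely to restrict $F$ and $g$ to a suitable complex curve or a definite-signature slice through $p$ transverse to the appropriate null directions of the Levi form — on such a slice $\mathbb S^n_\ell$ induces a genuine Siegel domain — and apply the one-variable Hopf lemma there, then show the conclusion $\partial g/\partial w(0)=0\Rightarrow g\equiv0$ propagates off the slice by the analyticity of $g$ and the CR-extension property of $\bH^n_\ell$ valid for $0<\ell<n-1$. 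Verifying that enough such slices exist and cover a neighborhood of $0$, and that the derivative condition glues correctly, is the delicate part.
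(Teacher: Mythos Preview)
Your proposal has genuine gaps in both approaches.

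\textbf{The deformation $F_t$ does not work.} You claim that $F_t(z,w)=F(z,w)+(0,\ldots,0,tw)$ still sends $\mathbb{H}^n_\ell$ into $\mathbb{H}^N_{\ell'}$. But on $\mathbb{H}^n_\ell$ we have $\im g=|\widetilde f|^2_{\ell'}$, so $F_t$ maps into $\mathbb{H}^N_{\ell'}$ iff $t\,\im w=t|z|^2_\ell\equiv 0$ on $\mathbb{H}^n_\ell$, which forces $t=0$. The vanishing of $\partial g/\partial w(0)$ plays no role in this computation; there is no cancellation of the kind you describe. This approach is not salvageable.

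\textbf{The Hopf-lemma approach cannot be right as stated, because it never uses the signature hypothesis.} Your second route argues by contradiction via a boundary-point lemma for $\im g$, without invoking $\ell'<2\ell$ (in (a)) or $\ell'<n-1$ (in (b)) in any substantive way. But Example \ref{example2} exhibits a map $F\colon\bC^3\to\bC^5$ sending $\mathbb{H}^3_1$ into $\mathbb{H}^5_2$ with $\partial g/\partial w(0)=0$ and $g=z_1w\not\equiv 0$; here $\ell'=2=2\ell=n-1$, showing the conclusion fails once the strict inequality is relaxed to equality. Any valid argument must therefore use the signature bound essentially, and yours does not. The propagation step ``by connectedness of $\mathbb{H}^n_\ell\cap U$ \ldots nonzero at $0$'' is also unjustified: nonvanishing of $\partial g/\partial w$ at some $p$ says nothing about its value at $0$.

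\textbf{How the paper actually proceeds.} The correct idea is a moving-point argument feeding back into Theorem \ref{main}. Let $E\subset M=\mathbb{H}^n_\ell\cap U$ be the (real-analytic) set where $F$ fails to be CR transversal. If $E=M$ near $0$, then $F(U)\subset\mathbb{H}^N_{\ell'}$ and $g\equiv 0$ by known results. Otherwise $C=M\setminus E$ is open and dense. For each $p\in C$ one applies Theorem \ref{main} to the re-centered map $F_p=\tau^F_p\circ F\circ\sigma^p_0$ (this is where $\ell'<2\ell$ or $\ell'<n-1$ is used) to obtain the rigid normal form \eqref{normgeq0} or \eqref{normleq0} after a further $\tau_p\in\mathrm{Aut}_0(\mathbb{H}^N_{\ell'})$. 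Since that normal form is transversal everywhere, any non-transversal point $q^*\in E$ must be sent by $F_p$ to the pole set of $\tau_p$; holomorphy of $\tau_p\circ F_p$ then forces the numerator of its last component to vanish, i.e.\ $g_p(q)=0$ where $q=(\sigma^p_0)^{-1}(q^*)$. Unwinding $g_p$ gives
\[
g(p)-\overline{g(q^*)}-2i\langle\widetilde f(p),\overline{\widetilde f(q^*)}\rangle_{\ell'}=0
\]
for all $p\in C$ (hence all $p\in U$), i.e.\ $F(U)\subset Q'_{F(q^*)}$. Taking $q^*=0\in E$ yields $F(U)\subset Q'_0=\{w^*=0\}$, so $g\equiv 0$. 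The signature hypothesis enters precisely through the invocation of Theorem \ref{main} at the transversal points.
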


\begin{rem}\Label{gequiv0} {\rm By Lemma 4.1 of \cite{BH} and the above discussion, under the
assumption that $\ell'<n-1$ and $F(U)\not \subset \mathbb
H^N_{\ell'}$,  there must be a point $p\in \mathbb{H}^n_\ell\cap
U$  and an open neighborhood
$V$ of $p$ in $\bC^n$ such that either $F(\mathbb S^n_{\ell}\cap
V)\subset \mathbb S^N_{\ell'}$ or $F(\mathbb S^n_{\ell}\cap
V)\subset\bC^N\setminus \overline{ \mathbb S^N_{\ell'}}$. Also, when
 $F(U)\subset \mathbb H^N_{\ell'}$ we have $g\equiv 0$  (see Lemma 4.1 in \cite{BH} or p. 605 in
\cite{BERtrans}).
 We remark that
in \cite{BH} it was proved that if $\ell'=\ell$ and $\partial g/\partial w(0)=0$, then
the stronger conclusion $F(U)\subset \mathbb
H^N_{\ell'}$ holds. The latter conclusion does not always hold when
$\ell'>\ell$ as is illustrated by Example \ref{example1} below.}
\end{rem}

\begin{rem}
The hypotheses on the signatures $\ell$ and $\ell'$ in Theorems
\ref{main} and \ref{main2}  are sharp in the sense that the
conclusions fail when the strict inequalities are replaced by
equalities. This is illustrated by Example \ref{example2} below.
\end{rem}

As in \cite{BH},  Theorem \ref{main} has immediate applications to the study
of  proper holomorphic mappings between classical domains in
complex projective spaces.
For $0\le \ell<n$, denote by ${\mathbb B}^n_\ell$ the domain in
${\mathbb {CP}}^n$ given by
\begin{equation}
{\mathbb B}_{\ell}^{n}:=\{[z_0,\cdots,z_n]\in {\CP}^n:
|z_0|^2+\cdots+|z_{\ell}|^2>|z_{\ell +1}|^2+\cdots+|z_n|^2\}.
\end{equation}
Then it is well known that the Cayley transformation $\Psi_{n}\colon  \bC^n\to  {\CP}^n$ given by
\begin{equation}\Label{Cayley}\Psi_{n}(z,
w)\colon=[i+w, 2z, i-w]
\end{equation}
 biholomorphically  maps the generalized
Siegel upper-half space
 ${\mathbb S}^n_{\ell}$ and its boundary ${\HH}^n_{\ell}$ into
${\B}^n_{\ell}\setminus \{[z_0,\cdots,z_{n}]:\ z_0+z_n =0\}$ and
$\p{\B}^n_{\ell}\setminus \{[z_0,\cdots,z_{n}]:\ z_0+z_n =0\}$,
respectively.

For
$ 0\le k \le m$, let $E_{(k,m)}$ denote the $m\times m$ diagonal
matrix with its first $k$ diagonal elements $-1$ and the rest $+1$,
and define
\begin{equation}
U(n+1,\ell+1) =\big\{
A\in \h{GL}(n+1,{\CC}),\ A E_{(\ell+1,n+1)}\-{A}^t=E_{(\ell+1,n+1)}
\big\}.
\end{equation}
In what follows, we will regard $U(n+1,\ell+1)$ as a
subgroup of the automorphism group of ${\CP}^n$ by identifying an
element $A$ in $U(n+1,\ell+1)$ with the holomorphic linear
map $\sigma\in \h{Aut}({\CP}^n)$ defined by
$\sigma([z_0,\cdots,z_n])=[z_0,\cdots,z_n] A.$
  Then, it is  well known that, with this identification, we have $U(n+1,\ell+1)
=\h{Aut}({\B}_{\ell}^{n})$, transitively acting on ${\B}_{\ell}^{n}$
(see e.g. \cite{CM}, $\S 1$)).

By repeating the arguments in the beginning of the proof of Theorem 1.1 in \cite{BH}, the following corollary can easily be deduced from Theorem \ref{main} above.

\begin{cor} \Label{maincor} Let $\ell\le (n-1)/2$ and $ \ell'\le (N-1)/2$. Let $F$
be a  holomorphic map from ${\B}^{n}_{\ell}$ into $\bP^N$ with
$1<n\leq N$. Then, the following hold:
\medskip

\noindent {\rm (a)} If $ \ell'< 2\ell$ and $F$ is  proper  from
${\B}^{n}_{\ell}$ into  ${\B}^{N}_{\ell'}$, then $F({\B}^n_\ell)$ is
contained in a linear projective subspace of ${\CP}^N$ of dimension
$n+\ell'-\ell.$
\medskip

\noindent {\rm (b)} If $ \ell'< n-1$ and $F$ is  proper  from
${\B}^{n}_{\ell}$ into ${\B}^{N}_{N-1-\ell'}$, then $F({\B}^n_\ell)$
is contained in a linear projective subspace of ${\CP}^N$ of
dimension $\ell'+\ell+1.$
\end{cor}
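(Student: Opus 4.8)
The plan is to deduce Corollary \ref{maincor} from Theorem \ref{main} by following the standard reduction used in \cite{BH}, which passes from the projective picture to the Siegel/hyperquadric picture via the Cayley transform. First I would fix a point $p\in\p\B^n_\ell$ and, using the transitivity of $\h{Aut}(\B^n_\ell)=U(n+1,\ell+1)$ on $\B^n_\ell$ together with transitivity of $\h{Aut}(\B^N_{\ell'})$ on $\B^N_{\ell'}$ (resp.\ $\h{Aut}(\B^N_{N-1-\ell'})$), arrange that $F(p)$ is a prescribed boundary point and, after composing with the inverse Cayley maps $\Psi_n^{-1}$ and $\Psi_N^{-1}$, obtain a local holomorphic map $\w F$ defined near $0\in\bC^n$ with $\w F(0)=0$ sending a piece of $\HH^n_\ell$ into $\HH^N_{\ell'}$ in case (a) (resp.\ $\HH^N_{N-1-\ell'}$ in case (b)). Since $F$ is proper, $\w F$ maps the side $\mathbb S^n_\ell$ into the corresponding side of the target; by the discussion following Theorem \ref{main}, this forces $\p g/\p w(0)\ge 0$, and a separate short argument (e.g.\ via the Hopf lemma, or directly because a proper map cannot be totally degenerate in the normal direction — invoking Remark \ref{gequiv0} to rule out $g\equiv 0$) gives the strict inequality $\p g/\p w(0)>0$, putting us in case (a) of Theorem \ref{main}.

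Next I would apply Theorem \ref{main}(a). In case (a) of the corollary the target signature is $\ell'$ with $\ell'<2\ell$, so the normalization \eqref{normgeq0} applies directly: there is $\g\in\h{Aut}_0(\HH^N_{\ell'})$ with $\g\circ\w F(z,w)=(z_1,\dots,z_\ell,\psi,z_{\ell+1},\dots,z_{n-1},\psi,0,\dots,0,w)$, where $\psi$ has $\ell'-\ell$ components. The nonzero components of this map involve only the $z_1,\dots,z_{n-1}$, the $\ell'-\ell$ functions $\psi_j$, and $w$, for a total of at most $(n-1)+(\ell'-\ell)+1=n+\ell'-\ell$ functions; hence the image lies in a coordinate subspace of $\bC^N$ of that dimension. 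Transporting back through $\Psi_N$ and recalling that $\h{Aut}_0(\HH^N_{\ell'})$ extends to an element of $U(N+1,\ell'+1)=\h{Aut}(\B^N_{\ell'})$, which acts on $\CP^N$ by projective linear maps, we conclude $F(\B^n_\ell)$ lies in a linear projective subspace of $\CP^N$ of dimension $n+\ell'-\ell$. For case (b) of the corollary the target is $\B^N_{N-1-\ell'}$, whose hyperquadric boundary has parameter $N-1-\ell'$; the hypothesis $\ell'<n-1$ translates to the relevant strict inequality for Theorem \ref{main}(a) applied with target signature $N-1-\ell'$ (one checks $N-1-\ell'<2\ell$ is \emph{not} what is needed — rather, since after a further automorphism swapping the roles this reduces to the situation where the effective "$\ell'$" equals $\ell$ plus something controlled by $n-1-\ell$), so that \eqref{normgeq0} again applies and now $\psi$ has $(N-1-\ell')-\ell$ components; however, the dimension count must be redone using the relation tying $N-1-\ell'$ to the data, and a cleaner route is to instead use part (b) of Theorem \ref{main} after noting $\p g/\p w(0)<0$ in a suitably chosen coordinate realization, yielding \eqref{normleq0} with $\psi$ having $\ell'-(n-1-\ell)$ components, for a total of $(n-1)+(\ell'-(n-1-\ell))+1=\ell'+\ell+1$ nonzero components.

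Concretely, for (b) I would choose the Cayley and automorphism normalizations so that the map $\w F$ is orientation-reversing on the normal variable, i.e.\ $\p g/\p w(0)<0$, which is the natural situation because properness into $\B^N_{N-1-\ell'}$ means $F$ sends $\B^n_\ell$ into the \emph{complement} of the closure of the "other" Siegel domain under the identification that makes the target parameter equal $\ell'$; then Theorem \ref{main}(b) (with its hypothesis $\ell'<n-1$, exactly the hypothesis of Corollary \ref{maincor}(b)) gives \eqref{normleq0}, whose image lies in the coordinate subspace spanned by $z_1,\dots,z_{n-1}$, the $\ell'-(n-1-\ell)$ functions $\psi_j$, and $w$ — dimension $\ell'+\ell+1$ — and pulling back through $\Psi_N$ and the projective-linear automorphism of $\CP^N$ gives the stated conclusion.

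The main obstacle I anticipate is bookkeeping, not mathematical depth: one must carefully track how the parameter $\ell$ versus the true signature $\min(\ell,n-1-\ell)$ behaves under the Cayley transform, verify that the stability group $\h{Aut}_0(\HH^N_{\ell'})$ really is the restriction of a \emph{projective linear} subgroup of $\h{Aut}(\CP^N)$ (so that "contained in a coordinate subspace" pushes forward to "contained in a projective linear subspace"), and — most delicately — establish the strict sign $\p g/\p w(0)>0$ (resp.\ $<0$) rather than merely $\ge 0$, which is where properness (as opposed to just mapping the boundary into the boundary) is essential and where Remark \ref{gequiv0} together with Lemma 4.1 of \cite{BH} must be invoked to exclude the degenerate alternative $g\equiv0$. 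Since the excerpt explicitly says the corollary follows "by repeating the arguments in the beginning of the proof of Theorem 1.1 in \cite{BH}," I expect the write-up to be short, citing \cite{BH} for the reduction and doing only the dimension count in detail.
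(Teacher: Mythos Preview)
Your approach is correct and matches what the paper intends: the paper itself gives no detailed proof but simply says the corollary follows ``by repeating the arguments in the beginning of the proof of Theorem 1.1 in \cite{BH},'' i.e., exactly the Cayley-transform reduction to Theorem~\ref{main} plus a dimension count that you outline. One comment: your second paragraph's detour for part~(b) (attempting to force Theorem~\ref{main}(a) with a modified target signature) is a dead end and should be deleted---the clean route is precisely the one you give in your third paragraph, namely observe that properness into $\B^N_{N-1-\ell'}$ corresponds, after the Cayley transform and a linear identification of $\partial\B^N_{N-1-\ell'}$ with $\HH^N_{\ell'}$, to the orientation-reversing case $\partial g/\partial w(0)<0$, so Theorem~\ref{main}(b) applies directly under the hypothesis $\ell'<n-1$ and yields the dimension $(n-1)+(\ell'-(n-1-\ell))+1=\ell'+\ell+1$.
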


\section{Notation, Definitions, and Two Basic Lemmas}

We shall use the notation and definitions introduced in
\cite{BH}, which we recall here for the reader's convenience.
 Let $ {\HH}^n_{\ell}\subset{\CC}^n$ and
${\HH}^N_{\ell'}\subset{\CC}^N$ be the standard hyperquadrics
defined by \eqref{hnl}:
\begin{equation*}
\begin{split} {\HH}^n_{\ell}:&=\big\{(z,w=u+iv)\in {\CC}^{n},\ v=\h{Im}\,
w=\sum_{j=1}^{n-1}\delta_{j,\ell}|z_j|^2\big\}; \\
{\HH}^N_{\ell'}:&=\big\{(z^*,w^*=u^*+iv^*)\in {\CC}^{N}, \
v^*=\sum_{j=1}^{N-1}\delta_{j,\ell'}|z_j^*|^2\big\}. \end{split}
\end{equation*}
     Here and in what follows, we denote by
${\delta}_{j,\ell}$  the symbol which takes value $-1$ when $1\le
j\le \ell$ and $1$ otherwise. For $\ell'\ge \ell$ and $N\ge
n>\ell-1$, we define
$${\HH}^N_{\ell, \ell', n}:=\big\{(z^*,w^*)\in {\CC}^{N}, \
\h{Im}\,w^*=\sum_{j=1}^{N-1}\delta_{j,\ell, \ell',n}|z_j^*|^2\big\}.
\leqno (2.2)$$
     with $\delta_{j,\ell,\ell',n}=-1$ for $j\le \ell$ or $n\le j\le
n+\ell'-\ell-1$, and $\delta_{j,\ell,\ell',n}=1$ otherwise.
When $\ell'>\ell$, ${\HH}^N_{\ell'}$ is
biholomorphically equivalent to ${\HH}^N_{\ell, \ell', n}$ by the
     linear map
\begin{align*}{\tag{2.3}\sigma_{\ell,\ell',n}(z^*,w^*):&=\\
(z^*_1,\cdots,z^*_{\ell}&,z^*_{\ell'+1},\cdots
,z^*_{n-1},z^*_{\ell+1}, \cdots,z_{\ell'}^*, z_{n}^*,\cdots,
z^*_{N-1},w^*).}\end{align*}
Write $\displaystyle
L_j=2i\delta_{j,\ell}\-{z_j}\frac{\partial}{\partial w}+
\frac{\partial}{\partial z_j}$  for $j=1,\cdots, n-1$ and
$T={\displaystyle \frac{\partial} {\partial u}}$. Then
$L_1,\cdots,L_{n-1}$ form a global basis for the complex
tangent bundle $\h{T}^{(1,0)}{\HH}^n_\ell$, and
$T$ is a tangent vector field of ${\HH}^n_\ell$ transversal
     to $T^{(1,0)}{\HH}_\ell^n\oplus T^{(0,1)}{\HH}_\ell^n$.
     Parameterize ${\HH}_\ell^n$ by  the map
$(z,\-{z},u)\mapsto (z,u+i\sum_{j=1}^{n-1}\delta_{j,\ell}|z_j|^2)$.
In what follows, we will assign the weights of $z$ and $u$ to be $1$
and $2$, respectively. For a non-negative integer $m$, a function
$h(z,\-{z},u)$ defined in a small neighborhood  $M$ of $0$ in
${\HH}^n_\ell$  is said to be
     $\owt (m)$, if
${h(tz,t\-{z},t^2u)/|t|^{m}}\ra 0$ uniformly for $(z,u)$ on any
     compact subset of $M$ for $t\in {\RR}\, ,t\ra 0$. (In this case,
we write $h=\owt (m)$). By convention, we write
     $h=\owt (0)$ if $h(z,\-{z}, u)\ra 0$ as $(z,\-{z},u)\ra 0$. For a
smooth function $h(z,\-{z},u)$
     defined in $M$, we denote by
$h^{(k)}(z,\-{z},u)$  the sum of
     terms of weighted degree $k$ in the Taylor
     expansion of $h$ at $0$.  We also denote by
$h^{(k)}(z,\-{z},u)$  a weighted
     homogeneous polynomial of weighted degree
$k$, (even if there is no specified function $h$).  When
$h^{(k)}(z,\-{z},u)$ extends to a weighted  holomorphic polynomial
of weighted degree $k$, we write it as
      $h^{(k)}(z,w)$, or
$h^{(k)}(z)$ if it depends only on $z$. Here again, $z$ has weight
$1$ and $w$ has weight $2$.

For a sufficiently smooth
function $h=h(x_1,\ldots, x_m)$ defined in an open subset of $\bC^m$ and any multiple index
$\alpha=(\alpha_1,\cdots,\alpha_{m})$, we write
$$D^\a_x h=\frac{\partial^{\a_1+\cdots+\a_m} h}{\partial
x_1^{\a_1}\cdots\partial x_{m}^{\a_m}}.$$
For two $m$-tuples $x=(x_1,\cdots,x_m), y=(y_1,\cdots,y_m)$ of
complex numbers, we write
$$\langle x,y\rangle_{\ell}=\sum_{j=1}^{m}\delta_{j,\ell}x_jy_j,\
\ \h{and} \ \ \ |x|_\ell^2=\langle x,x\rangle _{\ell}.$$
For $\ell'\ge \ell$ and $\ell-1\le n\le m$, we write $\langle x,y\rangle _{\ell,
\ell',n}=\sum_{j=1}^{m}\delta_{j,\ell,\ell',n}x_jy_j$.

For the proof of Theorems \ref{main} and \ref{main2}, it will be more convenient to
assume that the map $F$ sends $M={\HH}^n_\ell\cap U$ into
${\HH}^{N}_{\ell,\ell',n}$, with the conclusions modified
accordingly. The proof of the theorem is based on an induction
argument,  in the spirit of the Chern-Moser theory, using the
weighted expansion of the basic equations describing the inclusion
$F(U\cap \mathbb{H}^n_\ell)\subset \mathbb{H}^N_{\ell,\ell', n}$.
The method used here is largely motivated by the work in Huang
\cite{Hu1}, Ebenfelt-Huang-Zaitsev \cite{EHZ2}, and Baouendi-Huang
\cite{BH}.

The following two basic lemmas, also stated and used in \cite{BEH}, will
be crucial in the proof of Theorem \ref{main}.

\begin{lem}\Label{LemmaI}
 Let $k,\ell,n$ be nonnegative integers such $1\leq k\leq n-2$. Assume that
$a_1,\ldots, a_k$, $b_1,\ldots, b_k$ are germs at $0\in \bC^{n-1}$ of
holomorphic functions such that
\begin{equation}\Label{e:basiceq}
\sum_{i=1}^ka_i(z) \overline{b_i(\xi)}= A(z,\bar \xi)\langle z,\bar
\xi\rangle _{\ell},
\end{equation}
where $A(z,\bar \xi)$ is a germ at $0\in \bC^{n-1}\times\bC^{n-1}$ of a
holomorphic function in $(z,\bar\xi)$. Then $A(z,\bar \xi)\equiv 0$.
\end{lem}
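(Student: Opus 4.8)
The plan is to convert the equation into a linear-algebra statement about a fixed finite-dimensional space of germs and then exploit that the form $\langle\,\cdot\,,\,\cdot\,\rangle_\ell$ is irreducible. It is worth recording why $k=1$ is easy: there $a_1(z)\overline{b_1(\xi)}=A(z,\bar\xi)\langle z,\bar\xi\rangle_\ell$, and since $\langle z,\bar\xi\rangle_\ell$ (which is $\sum_{j=1}^{n-1}z_j\bar\xi_j$ up to signs, with $n-1\ge 2$) is irreducible in the ring of germs, it must divide $a_1$ or $\overline{b_1}$; a divisor of $a_1$ that involves $\bar\xi$ forces $a_1\equiv 0$ (set $\bar\xi=0$), and likewise for $b_1$, so $A\equiv 0$. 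For $k\ge 2$ the sum cannot be split and this must be upgraded to a dimension count.

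First I would normalize: replacing $\xi_j$ by $\delta_{j,\ell}\xi_j$ reduces to $\ell=0$, and then I regard $\zeta:=\bar\xi$ as an independent holomorphic variable, so the identity becomes one of germs at $0\in\mathbb{C}^{n-1}\times\mathbb{C}^{n-1}$,
\[
\sum_{i=1}^k a_i(z)b_i(\zeta)=A(z,\zeta)\langle z,\zeta\rangle,\qquad \langle z,\zeta\rangle:=\sum_{j=1}^{n-1}z_j\zeta_j.
\]
Assume $A\not\equiv 0$. Discarding redundant terms I may assume $a_1,\dots,a_k$ (hence also $b_1,\dots,b_k$) are $\mathbb{C}$-linearly independent; put $W:=\mathrm{span}_{\mathbb{C}}\{b_1,\dots,b_k\}$, so $\dim W=k\le n-2$. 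The crucial remark is that for $\hat z$ near $0$ the map $w\mapsto\Theta(w):=\langle\hat z+w,\zeta\rangle\,A(\hat z+w,\zeta)=\sum_i a_i(\hat z+w)b_i(\zeta)$ is a holomorphic $W$-valued function of $w$, so all its Taylor coefficients at $w=0$ lie in $W$.

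Since $\langle z,\zeta\rangle$ is prime in $\mathbb{C}[[z,\zeta]]$, I would write $A=\langle z,\zeta\rangle^{m}A_m$ with $m\ge 0$, $\langle z,\zeta\rangle\nmid A_m$, $A_m\not\equiv 0$. A direct computation of the first-order coefficients of $\Theta$ gives
\[
\partial_{w_j}\Theta(0)=\langle\hat z,\zeta\rangle^{m}\,u_j,\qquad u_j:=(m+1)\zeta_j A_m(\hat z,\zeta)+\langle\hat z,\zeta\rangle\,\partial_{z_j}A_m(\hat z,\zeta),
\]
so $u_j\in W_m:=\{g\in\mathbb{C}[[\zeta]]:\langle\hat z,\zeta\rangle^{m}g\in W\}$; multiplication by $\langle\hat z,\zeta\rangle^{m}$ embeds $W_m$ into $W$, hence $\dim W_m\le k$. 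Then I would pick $\hat z\ne 0$ so that (ii) $\langle\hat z,\zeta\rangle\nmid A_m(\hat z,\zeta)$ and (iii) $\big((m+1)A_m+\sum_j z_j\partial_{z_j}A_m\big)(\hat z,\zeta)\not\equiv 0$. Such $\hat z$ exist, because if (ii) failed on an open set then $A_m$ would vanish on an open piece, hence on all, of $\{\langle z,\zeta\rangle=0\}$, forcing $\langle z,\zeta\rangle\mid A_m$; and $(m+1)A_m+\sum_j z_j\partial_{z_j}A_m\not\equiv 0$ whenever $A_m\not\equiv 0$ by an Euler-operator computation, so the failure set of (iii) is a proper analytic subset. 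For such $\hat z$, a relation $\sum_j\mu_j u_j=0$ yields that $\langle\hat z,\zeta\rangle$ divides $\big(\sum_j\mu_j\zeta_j\big)A_m(\hat z,\zeta)$, hence — by (ii) and primeness — divides $\sum_j\mu_j\zeta_j$, so $\mu=c\hat z$; substituting back and invoking (iii) forces $c=0$. Thus $u_1,\dots,u_{n-1}$ are linearly independent in $W_m$, giving $n-1\le\dim W_m\le k\le n-2$, a contradiction, so $A\equiv 0$.

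The step I expect to be the real obstacle is precisely the possibility $\langle z,\zeta\rangle\mid A$, i.e. $m\ge 1$: the obvious specializations (e.g. $\zeta=e_j$) then produce only linearly dependent germs, and one genuinely needs the uniform device above — factoring out the exact power $\langle z,\zeta\rangle^{m}$, working in $W_m$ rather than $W$, and using that $(m+1)A_m+\sum_j z_j\partial_{z_j}A_m$ never vanishes identically — to recover $n-1$ independent elements. Once that structure is set up, the genericity claims (ii)–(iii) and the Leibniz/Euler computations are routine.
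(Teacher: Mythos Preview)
Your argument is correct. The paper itself does not prove this lemma; it simply cites Lemma~3.2 of \cite{Hu1} (and Lemma~2.1 of \cite{EHZ2} for general~$\ell$), so your proof is a genuine self-contained addition rather than a paraphrase.

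For comparison, the original argument in \cite{Hu1} also rests ultimately on the irreducibility of $\langle z,\zeta\rangle$ but is organized as an inductive analysis of bihomogeneous expansions. Your route is more direct: you factor out the exact power $\langle z,\zeta\rangle^{m}$ dividing $A$, fix a generic base point $\hat z$, and differentiate once in the $z$-directions to produce $n-1$ germs $u_1,\dots,u_{n-1}$ that are forced into the auxiliary space $W_m$ of dimension at most $k$. The Euler-operator observation that $(m+1)A_m+\sum_j z_j\partial_{z_j}A_m\not\equiv 0$ whenever $A_m\not\equiv 0$ is exactly what is needed to kill the one possible relation $\mu=c\hat z$, and this is where your argument is sharpest; the step you flagged as the obstacle ($m\ge 1$) is handled cleanly by passing to $W_m$ rather than $W$. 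Both proofs give the same optimal hypothesis $k\le n-2$. One small point worth making explicit in a write-up: the reduction ``discarding redundant terms'' to independent $a_i$ \emph{and} independent $b_i$ uses that a linear relation on either side can be absorbed into the other to lower $k$; and the genericity claims (ii)--(iii) on $\hat z$ are, as you indicate, satisfied off a proper analytic subset of any small punctured neighborhood of $0$, which is all that is needed.
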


Lemma \ref{LemmaI} was proved in  \cite{Hu1}  (see Lemma 3.2, \cite{Hu1}). In that paper, the lemma is stated only for $\ell=0$, but the
proof for $\ell>0$ is identical (see also  Lemma 2.1 in \cite{EHZ2}).
Lemma \ref{LemmaI} was also a crucial
tool in the papers \cite{Hu1}, \cite{EHZ1}, \cite{EHZ2}.
 We shall also need the
following corollary, which follows from repeated use of Lemma
\ref{LemmaI}.

\begin{cor}\Label{CorI}
 Let $\ell,n,k_0, k_1,\ldots,k_r$ be nonnegative integers such $1\leq k_j\leq n-2$ for
 $j=0,\ldots r$. Assume that
$a_i^j$, $b_i^j$ for $j=0,\ldots, r$ and $i=1,\ldots, k_j$ are germs
at $0\in \bC^{n-1}$ of holomorphic functions such that
\begin{equation}\Label{e:basiceq3}
\sum_{j=0}^r \left(\sum_{i=1}^{k_j}a^j_i(z)
\overline{b^j_i(\xi)}\right)\langle z,\bar \xi\rangle_\ell^j=
A(z,\bar \xi)\langle z,\bar \xi\rangle _{\ell}^{r+1},
\end{equation}
where $A(z,\bar \xi)$ is a germ at $0\in \bC^{n-1}\times\bC^{n-1}$ of a
holomorphic function in $(z,\bar\xi)$. Then
\begin{equation}
\sum_{i=1}^{k_j}a^j_i(z) \overline{b^j_i(\xi)}=0, \quad j=0,\ldots,
r,
\end{equation}
and $A(z,\bar \xi)\equiv 0$.
\end{cor}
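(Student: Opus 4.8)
\textbf{Proof proposal for Corollary \ref{CorI}.}
The plan is to induct on $r$, peeling off the highest-power term of $\langle z,\bar\xi\rangle_\ell$ at each stage and invoking Lemma \ref{LemmaI}. The base case $r=0$ is exactly Lemma \ref{LemmaI} applied to \eqref{e:basiceq3}: it gives $\sum_{i=1}^{k_0}a^0_i(z)\overline{b^0_i(\xi)}=A(z,\bar\xi)\langle z,\bar\xi\rangle_\ell$, and then a second application of Lemma \ref{LemmaI} (with the roles of the sum and $A$ read off directly) forces $A\equiv 0$, hence $\sum_i a^0_i\overline{b^0_i}\equiv 0$ as well.

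For the inductive step, suppose the statement holds for $r-1$ and that \eqref{e:basiceq3} holds for $r$. First I would rewrite the identity as
\begin{equation*}
\sum_{i=1}^{k_0}a^0_i(z)\overline{b^0_i(\xi)}
=\Big(A(z,\bar\xi)\langle z,\bar\xi\rangle_\ell^{r}
-\sum_{j=1}^r\Big(\sum_{i=1}^{k_j}a^j_i(z)\overline{b^j_i(\xi)}\Big)\langle z,\bar\xi\rangle_\ell^{j-1}\Big)\langle z,\bar\xi\rangle_\ell,
\end{equation*}
so the right-hand side is a holomorphic function of $(z,\bar\xi)$ times $\langle z,\bar\xi\rangle_\ell$. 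Lemma \ref{LemmaI} then yields $\sum_{i=1}^{k_0}a^0_i(z)\overline{b^0_i(\xi)}=0$ (the ``$A$'' in that lemma being the bracketed expression, which is holomorphic). Substituting this back into \eqref{e:basiceq3} removes the $j=0$ term; dividing the remaining identity by the nonzero germ $\langle z,\bar\xi\rangle_\ell$ (legitimate since $\langle z,\bar\xi\rangle_\ell$ is a nonzero holomorphic germ and the ring of germs is an integral domain) produces
\begin{equation*}
\sum_{j=1}^r\Big(\sum_{i=1}^{k_j}a^j_i(z)\overline{b^j_i(\xi)}\Big)\langle z,\bar\xi\rangle_\ell^{\,j-1}
=A(z,\bar\xi)\langle z,\bar\xi\rangle_\ell^{\,r},
\end{equation*}
which is an instance of \eqref{e:basiceq3} with $r$ replaced by $r-1$ (reindex $j\mapsto j-1$). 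By the inductive hypothesis all the remaining Hermitian sums vanish and $A\equiv 0$, completing the induction.

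I do not expect a serious obstacle here; the only points requiring a word of care are that Lemma \ref{LemmaI} is being applied with its ``$A$'' taken to be a composite holomorphic expression (which is fine, since sums and products of holomorphic germs are holomorphic), and that cancelling the factor $\langle z,\bar\xi\rangle_\ell$ is valid because the local ring $\mathcal{O}_{0}(\bC^{n-1}\times\bC^{n-1})$ is an integral domain and $\langle z,\bar\xi\rangle_\ell\not\equiv 0$. One should also note the hypothesis $1\le k_j\le n-2$ is exactly what is needed for each application of Lemma \ref{LemmaI} to be legal.
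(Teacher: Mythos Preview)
Your argument is correct and is exactly what the paper intends: it states only that the corollary ``follows from repeated use of Lemma \ref{LemmaI},'' and your induction on $r$ carries this out precisely. One very small wording quibble: in the base case $r=0$, a single application of Lemma \ref{LemmaI} already gives $A\equiv 0$ (and hence the sum vanishes); no ``second application'' is needed.
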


The second lemma that we shall need is the following.

\begin{lem}\Label{LemmaII}
Let $k,\ell,m,n$ be nonnegative integers such that
$k<l\leq (n-1)/2$ and $k\leq m$. Assume that $a_1,\ldots, a_k, b_1,\ldots, b_m$ are
germs at $0\in \bC^{n-1}$ of holomorphic functions such that
\begin{equation}\Label{e:basiceq2}
-\sum_{i=1}^k|a_i(z)|^2+\sum_{j=1}^m|b_j(z)|^2=
A(z,\bar z)|z|_\ell^2,
\end{equation}
where $A(z,\bar\xi)$ is a germ at $0\in \bC^{n-1}\times\bC^{n-1}$ of a
holomorphic function of $(z,\bar\xi)$. Then $A(z,\bar \xi)\equiv 0$
and $(b_1,\ldots, b_m)=(a_1,\ldots,a_k)\cdot \mathcal {U}$, where
$\mathcal {U}$ is a constant $(k\times m)$-matrix such that
$\mathcal {U}\cdot \-{\mathcal{U}}^{t}=Id_{k\times k}$.
\end{lem}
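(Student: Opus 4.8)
\textbf{Proof proposal for Lemma \ref{LemmaII}.}

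The plan is to first extract the real-analytic identity and polarize, then use a rank argument on the associated quadratic forms together with Lemma \ref{LemmaI} to conclude. First I would rewrite \eqref{e:basiceq2} as the polarized identity
\begin{equation*}
-\sum_{i=1}^k a_i(z)\overline{a_i(\xi)}+\sum_{j=1}^m b_j(z)\overline{b_j(\xi)}=A(z,\bar\xi)\langle z,\bar\xi\rangle_\ell,
\end{equation*}
which holds for $(z,\xi)$ near $0$ since both sides are holomorphic in $z$ and antiholomorphic in $\xi$ and agree on the diagonal. Setting $\xi=0$ (and using the freedom to translate, or just expanding in Taylor series) shows $a_i(0)=b_j(0)=0$ is \emph{not} forced, so instead I would argue at the level of the $\ZZ$-module of functions generated. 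The cleanest route: view the left side as $\langle b(z),\overline{b(\xi)}\rangle - \langle a(z),\overline{a(\xi)}\rangle$ where $a=(a_1,\dots,a_k)$, $b=(b_1,\dots,b_m)$, and observe that the right side is divisible by $\langle z,\bar\xi\rangle_\ell$.

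The key step is to apply Lemma \ref{LemmaI}, but that lemma in its stated form concerns a \emph{sum} $\sum a_i\overline{b_i}$ equal to $A\langle z,\bar\xi\rangle_\ell$ with all signs positive; here we have a difference. To handle this I would move the $a$-terms to the right:
\begin{equation*}
\sum_{j=1}^m b_j(z)\overline{b_j(\xi)} = \sum_{i=1}^k a_i(z)\overline{a_i(\xi)} + A(z,\bar\xi)\langle z,\bar\xi\rangle_\ell,
\end{equation*}
and think of the term $A(z,\bar\xi)\langle z,\bar\xi\rangle_\ell$ as itself expandable, since $\langle z,\bar\xi\rangle_\ell=\sum_{j=1}^{n-1}\delta_{j,\ell}z_j\bar\xi_j = -\sum_{j\le\ell}z_j\bar\xi_j+\sum_{j>\ell}z_j\bar\xi_j$, so modulo a power of the ideal generated by the $z_j$'s we can treat everything as a Hermitian form identity. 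The honest approach is a linear-algebra one: let $r$ be the maximal integer so that $A\in (\text{ideal})^{?}$ — actually I would instead follow the standard trick from \cite{Hu1}, \cite{EHZ2}: suppose $A\not\equiv 0$ and let $A(z,\bar\xi)=A_0 + (\text{higher order})$ with $A_0$ the lowest-degree nonzero homogeneous part; comparing lowest-order terms on both sides gives a Hermitian identity among the linear parts of $a_i,b_j$ of the form $\sum \beta_j(z)\overline{\beta_j(\xi)} - \sum\alpha_i(z)\overline{\alpha_i(\xi)} = (\text{poly})\cdot\langle z,\bar\xi\rangle_\ell$. Since $k<\ell\le(n-1)/2$, the form $|z|_\ell^2$ has both at least $\ell$ positive and at least $\ell$ negative eigenvalues, and the left side is a difference of a rank-$\le m$ positive semidefinite form and a rank-$\le k$ positive semidefinite form; a signature/rank count (the negative index of the left side is at most $k<\ell$, but $A_0\langle z,\bar\xi\rangle_\ell$ would force negative index $\ge\ell$ unless $A_0\equiv0$) yields the contradiction, so $A\equiv0$.

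Once $A\equiv 0$, the identity becomes $\sum_{j=1}^m b_j(z)\overline{b_j(\xi)} = \sum_{i=1}^k a_i(z)\overline{a_i(\xi)}$ as germs, i.e. two Hermitian sums of squares of holomorphic functions agree. This is the classical fact that such an identity forces $b = a\cdot\mathcal U$ for a constant matrix $\mathcal U$ with $\mathcal U\,\overline{\mathcal U}^t = Id_{k\times k}$: expand both sides in Taylor series in $z$ and $\bar\xi$, regard the coefficients as vectors in a Hilbert space, and note that the Gram matrix of $\{a_i\}$ (as a spanning set, after discarding linear dependencies — here the hypothesis $k\le m$ and the structure forces the $a_i$ to be linearly independent, or one reduces to that case) equals the Gram matrix of $\{b_j\}$ composed with the inclusion, giving an isometry $\mathcal U$. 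I would cite this as the standard polarization/Gram-matrix argument (as in \cite{DAn}, or Lemma 3.2 and its proof in \cite{Hu1}). The main obstacle I anticipate is the sign issue in invoking Lemma \ref{LemmaI}: the lemma as stated does not directly cover the indefinite combination, so the real content of the proof is the rank/signature argument showing $A\equiv0$, after which the matrix conclusion is routine. A secondary subtlety is ensuring the $a_i$ can be taken linearly independent (if not, absorb dependencies and reduce $k$, noting the hypothesis $k<\ell$ is preserved), so that $\mathcal U$ is genuinely a $k\times m$ matrix with $\mathcal U\overline{\mathcal U}^t=Id$.
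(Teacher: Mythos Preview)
Your approach for the second conclusion (the matrix $\mathcal U$) matches the paper exactly: both invoke D'Angelo's lemma once $A\equiv 0$ is known.

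For the first conclusion ($A\equiv 0$), your route differs from the paper's. The paper does not give a self-contained argument at all; it simply cites Lemma~4.1 of \cite{BH} (applied with $\ell'=\ell$, after Lemma~2.1 of \cite{BH}) and alternatively Theorem~5.7 of \cite{BERtrans}. These are transversality/structure results for CR maps between hyperquadrics: one packages $(a,b)$ into a map from $\mathbb H^n_\ell$ into a hyperquadric of signature $k$ and invokes the cited theorem to force the map into the target quadric, which translates back to $A\equiv 0$. Your direct ``negative index'' argument is morally the same phenomenon stripped of the CR-geometric language, and is arguably more transparent for this lemma in isolation.

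That said, your sketch has a genuine gap. You write ``comparing lowest-order terms on both sides gives a Hermitian identity among the linear parts of $a_i,b_j$,'' but this only makes sense when the lowest-order part $A_0$ of $A$ is a nonzero \emph{constant}. If $A(0,0)=0$ while $A\not\equiv 0$, the lowest-degree contribution on the right has total degree $\geq 3$, and the relevant left-hand contribution is a mix of products $a_i^{(p)}\overline{a_i^{(q)}}$, $b_j^{(p)}\overline{b_j^{(q)}}$ with $p\neq q$, not simply the ``linear parts.'' Your signature assertion that ``$A_0\langle z,\bar\xi\rangle_\ell$ would force negative index $\geq\ell$'' is correct and is the heart of the matter, but it needs to be stated and proved for the full Hermitian form (viewing the polarized identity as an equality of Hermitian kernels on the space of Taylor monomials, where the left side has at most $k$ negative squares while any nonzero multiple of $\langle z,\bar\xi\rangle_\ell$ contributes at least $\ell$); the reduction to a single homogeneous degree does not suffice. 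Once you make that count rigorous (or simply cite \cite{BH} or \cite{BERtrans} as the paper does), the argument is complete.
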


The proof of the first part of Lemma \ref{LemmaII},
$A(z,\bar\xi)\equiv0$, can be obtained from Lemma 4.1 of \cite{BH}
(with $\ell'=\ell$ and after a direct application of Lemma 2.1 of
\cite{BH}). This part of the lemma also follows in a straightforward
way from Theorem 5.7 in the subsequent work \cite{BERtrans}. The
last conclusion follows directly from a lemma by D'Angelo \cite{DA}.

\section{ Proof of Theorem $\ref{main}$}\Label{mainsec}

We shall first prove part (a) of Theorem \ref{main} and hence, in
addition to the main hypotheses in the theorem, we assume also that
$\partial g/\partial w(0)>0$. We use the notation introduced in $\S
2$ together with the basic set-up in \cite{BH}. First, we note that
the conclusions $\ell\leq \ell' $  and $n-\ell\leq N-\ell'$ follow
immediately by counting the number of negative and positive
eigenvalues on both sides in equation (2.6) in \cite{BH} (as in
Lemma 2.1 (a) in \cite{BH}).

In what follows, we identify $\mathbb H^N_{\ell'}$ with $\mathbb H^N_{\ell,\ell',n}$ as explained in Section 2. The fact that the mapping $F$ sends $\mathbb H^n_\ell$ into $\mathbb H^N_{\ell,\ell',n}$ means that the following basic equation for $F=(f,\varphi,g)=(f_1,\ldots, f_{n-1},\varphi_1,\ldots, \varphi_{N-n}, g)$  holds:
\begin{align}
\im g=\langle f,\-f \rangle_\ell+\langle \varphi,\-\varphi
\rangle_{\tau}\ \mbox{when}\ \im w=\langle z,\-z \rangle_\ell.
\label{A1}
\end{align}
Here and in what follows, we use the notation $\tau:=\ell'-\ell$. As
in the introduction, we shall also use the notation $\widetilde
f:=(f,\varphi)$. By the first part of Lemma 2.2 in \cite{BH}, we can
assume, without loss of generality, that $F$ has the following
normalization $F=(f,\varphi,g)$:
\begin{equation}\Label{norm}
\begin{aligned}
&f(z,w)=z+\frac{i}{2}a^{(1)}(z)w+o_{wt}(3),\\
&\varphi(z,w)=\varphi^{(2)}(z)+o_{wt}(2),\\
&g(z,w)=w+o_{wt}(4)
\end{aligned}
\end{equation}
with
\begin{align}\Label{SFF}
\left\langle a^{(1)}(z),\-z\right\rangle_\ell|z|^2_\ell=\left\langle
\varphi^{(2)}(z),\overline{\varphi^{(2)}(z)}\right\rangle_{\tau}.
\end{align}
 We remark that to achieve the normalization \eqref{norm} above,
it suffices to compose the original map $F$ from the left with
$\tilde{\sigma}\in Aut_0(\mathbb{H}^N_{\ell,\ell'})$ (see Lemma 2.2
in \cite{BH}). Since $\tau=\ell'-\ell<\ell$ by the assumption in part (a) of
the theorem, it follows from \eqref{SFF} and Lemma \ref{LemmaII}
that $\left\langle a^{(1)}(z),\-z\right\rangle_\ell\equiv 0$. Thus,
$a^{(1)}(z)\equiv 0$ (and hence $f^{(3)}\equiv 0$) and
\begin{equation}\Label{eqn: 001}
\left\langle
\varphi^{(2)}(z),\overline{\varphi^{(2)}(z)}\right\rangle_{\tau}\equiv0
\end{equation}

Assume that we have shown
$g^{(t)}\equiv 0$, $f^{(t-1)}\equiv 0$ for $3\leq t<s$. Observe that we have shown this for $s=5$.
Then collecting terms of weighted degree $s$ in (\ref{A1}), we obtain
\begin{multline}\Label{A2}
\im \left\{g^{(s)}(z,w)-2i\langle
\-z,f^{(s-1)}(z,w) \rangle_\ell\right\}=\\
 \sum\limits_{s_1+s_2=s}\langle
\varphi^{(s_1)}(z,w),\overline{\varphi^{(s_2)}(z,w)}\rangle_{\tau},\ \text{when } w=u+i\langle z,\bar z \rangle_\ell.
\end{multline}
For convenience, we shall use the notation \begin{equation}
\mathscr{L}(p,q)(z,\bar z,u):=\im \left\{q(z,w)-2i\langle
\-z,p(z,w) \rangle_\ell\right\}\bigg|_{w=u+i\langle z,\bar z\rangle_\ell},
\end{equation}
where $p(z,w)=(p_1(z,w),\ldots, p_{n-1}(z,w))$ and $q(z,w)$ are holomorphic polynomials. Thus, the equation \eqref{A2} can be written
\begin{equation}\Label{A22}
\mathscr{L}(f^{(s-1)},g^{(s)})(z,\bar z,u)=\sum\limits_{s_1+s_2=s}\langle
\varphi^{(s_1)}(z,w),\overline{\varphi^{(s_2)}(z,w)}\rangle_\tau\bigg|_{w=u+i\langle z,\bar z\rangle_\ell}
\end{equation}

The main step in the proof of Theorem \ref{main} is an induction based on the following result.

\begin{prop}\Label{mainprop} Let $F=(f,\varphi,g)$ be any  normalized map as in \eqref{norm} sending an open piece of  ${\HH}^n_\ell$ near the
origin into ${\HH}^N_{\ell,\ell'}$. Assume that for  all $3\leq t\leq 2(s^{\ast}-1)$
\begin{equation}\Label{induc}
f^{(t-1)}\equiv0,\ g^{(t)}\equiv 0,\
\langle \varphi^{(s_1)},\overline{\varphi^{(s_2)}} \rangle_{\tau}\equiv
0,\quad \forall(s_1,s_2)\colon s_1+s_2=t.
\end{equation}
Then \eqref{induc} holds also for $t=2(s^{\ast}-1)+1$ and
$t=2s^{\ast}$ for any such an $F$.
\end{prop}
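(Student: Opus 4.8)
The plan is to prove Proposition \ref{mainprop} by a careful weighted expansion of the basic equation \eqref{A1}, combined with repeated applications of Lemma \ref{LemmaI} (and its Corollary \ref{CorI}) and Lemma \ref{LemmaII}. Under the induction hypothesis \eqref{induc}, the first new information comes from collecting terms of weighted degree $t=2s^{\ast}-1$ in \eqref{A1}. In that degree, all the cross-terms $\langle \varphi^{(s_1)},\overline{\varphi^{(s_2)}}\rangle_\tau$ with $s_1+s_2=t$ and $s_1,s_2\ge 2$ vanish by hypothesis (since then $\min(s_1,s_2)\le t-2 \le 2(s^{\ast}-1)$, so the pair is covered by \eqref{induc}), so the right-hand side of \eqref{A22} reduces to at most the terms where one index is $0$ or $1$; but $\varphi^{(0)}=\varphi^{(1)}=0$ by the normalization \eqref{norm}, so the right side is actually zero. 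Hence $\mathscr{L}(f^{(2s^{\ast}-2)},g^{(2s^{\ast}-1)})\equiv 0$. The standard Chern–Moser-type argument (separating holomorphic, conjugate-holomorphic, and mixed parts, exactly as in \cite{BH}, \cite{Hu1}) then forces $g^{(2s^{\ast}-1)}\equiv 0$ and $f^{(2s^{\ast}-2)}\equiv 0$. This disposes of the odd case $t=2(s^{\ast}-1)+1$; I would write $s:=2s^{\ast}-1$ and invoke the lemma on the structure of solutions of $\mathscr{L}(p,q)=0$ in odd weighted degree.

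Next I would collect terms of weighted degree $t=2s^{\ast}$ in \eqref{A1}. Now \eqref{A22} becomes
\begin{equation*}
\mathscr{L}(f^{(2s^{\ast}-1)},g^{(2s^{\ast})})(z,\bar z,u)=2\,\langle \varphi^{(s^{\ast})}(z,w),\overline{\varphi^{(s^{\ast})}(z,w)}\rangle_\tau\Big|_{w=u+i\langle z,\bar z\rangle_\ell}+\text{lower cross-terms},
\end{equation*}
where the ``lower cross-terms'' $\langle\varphi^{(s_1)},\overline{\varphi^{(s_2)}}\rangle_\tau$ with $s_1\ne s_2$, $s_1+s_2=2s^{\ast}$, $s_1,s_2\ge 2$ again vanish by \eqref{induc} (since $\min(s_1,s_2)\le 2s^{\ast}-2=2(s^{\ast}-1)$), and the terms with an index $0$ or $1$ vanish by \eqref{norm}. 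So in fact the equation is simply $\mathscr{L}(f^{(2s^{\ast}-1)},g^{(2s^{\ast})})=2\langle\varphi^{(s^{\ast})},\overline{\varphi^{(s^{\ast})}}\rangle_\tau|_{w=\cdots}$. Restricting to $w=u+i\langle z,\bar z\rangle_\ell$ and using that $\varphi^{(s^{\ast})}(z,w)$ is weighted-homogeneous, I would expand $\varphi^{(s^{\ast})}(z,u+i\langle z,\bar z\rangle_\ell)$ in powers of $\langle z,\bar z\rangle_\ell$ and $u$; after also substituting $u$ appropriately (or differentiating in $u$ and setting $u=0$, as in the Chern–Moser normalization), one reduces to a polynomial identity on $\bC^{n-1}\times\bC^{n-1}$ of the shape
\begin{equation*}
\sum_{j\ge 0}\Big(\sum_i a^j_i(z)\overline{b^j_i(\xi)}\Big)\langle z,\bar\xi\rangle_\ell^{\,j}=A(z,\bar\xi)\langle z,\bar\xi\rangle_\ell,
\end{equation*}
to which Corollary \ref{CorI} applies. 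The upshot should be that $g^{(2s^{\ast})}$ is divisible by $\langle z,\bar z\rangle_\ell$ in an appropriate sense, that $f^{(2s^{\ast}-1)}$ has a correspondingly constrained form, and — crucially — that $\langle\varphi^{(s^{\ast})}(z),\overline{\varphi^{(s^{\ast})}(z)}\rangle_\tau$ (the ``pure $z$'' part of $\varphi^{(s^{\ast})}$, i.e.\ $\varphi^{(s^{\ast})}(z,0)$) satisfies a relation of the form $\langle\varphi^{(s^{\ast})}(z),\overline{\varphi^{(s^{\ast})}(z)}\rangle_\tau=B(z,\bar z)|z|_\ell^2$.

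Then I would appeal to Lemma \ref{LemmaII}: since $\tau=\ell'-\ell<\ell\le (n-1)/2$ (this is exactly where the hypothesis $\ell'<2\ell$ of Theorem \ref{main}(a) enters — it is what makes the ``negative part'' count $\tau$ strictly smaller than $\ell$), the sign condition in Lemma \ref{LemmaII} is met, forcing $B\equiv 0$, and moreover the components of $\varphi^{(s^{\ast})}(z)$ span a space spanned by (at most) $\tau$ of the negative coordinates of $z$ — but wait, one must be careful about which Hermitian form plays which role; I would match signatures so that the conclusion is $\langle\varphi^{(s^{\ast})},\overline{\varphi^{(s^{\ast})}}\rangle_\tau\equiv 0$ and, feeding this back, $g^{(2s^{\ast})}\equiv 0$ and $f^{(2s^{\ast}-1)}\equiv 0$. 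Finally, to close the induction I still need the \emph{full} vanishing $\langle\varphi^{(s_1)},\overline{\varphi^{(s_2)}}\rangle_\tau\equiv 0$ for \emph{all} $s_1+s_2=2s^{\ast}$, not just $s_1=s_2=s^{\ast}$; the off-diagonal pairs are already covered by the induction hypothesis as noted, so only the diagonal pair is new, and it has just been handled.

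I expect the main obstacle to be the bookkeeping in the degree-$2s^{\ast}$ step: extracting from the single scalar identity $\mathscr{L}(f^{(2s^{\ast}-1)},g^{(2s^{\ast})})=2\langle\varphi^{(s^{\ast})},\overline{\varphi^{(s^{\ast})}}\rangle_\tau$ the three separate pieces of information (vanishing of $g^{(2s^{\ast})}$, vanishing of $f^{(2s^{\ast}-1)}$, and the Hermitian identity for $\varphi^{(s^{\ast})}$ to which Lemma \ref{LemmaII} applies). Concretely, after the substitution $w=u+i\langle z,\bar z\rangle_\ell$ one must compare the purely holomorphic-in-$z$ part, the purely anti-holomorphic part, and the genuinely mixed part of both sides; the holomorphic/anti-holomorphic parts give $g^{(2s^{\ast})}(z,w)=$ (a $u$-polynomial with coefficients that are multiples of $\langle z,\bar z\rangle_\ell^{\,k}$), and iterating the comparison degree by degree in the mixed part, together with Corollary \ref{CorI}, is where all the work lies. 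One must also take care that $f^{(2s^{\ast}-1)}$, a priori only holomorphic, does not absorb spurious terms — this is handled because the normalization \eqref{norm} has already pinned down the low-order part of $f$, and Lemma \ref{LemmaI} forces the relevant coefficient function $A$ to vanish. Once these are disentangled, applying Lemma \ref{LemmaII} under the hypothesis $\tau<\ell$ is immediate and yields the conclusion.
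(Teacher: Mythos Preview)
Your proposal rests on a fundamental misreading of the induction hypothesis \eqref{induc}. That hypothesis asserts $\langle \varphi^{(s_1)},\overline{\varphi^{(s_2)}}\rangle_\tau\equiv 0$ only for pairs whose \emph{sum} $s_1+s_2=t$ lies in the range $3\le t\le 2(s^*-1)$; it says nothing about pairs with $s_1+s_2=2s^*-1$ or $s_1+s_2=2s^*$, no matter how small $\min(s_1,s_2)$ is. Thus in the odd step $t=2s^*-1$, the right-hand side of \eqref{A22} is \emph{not} zero: every cross term $\langle\varphi^{(s_1)},\overline{\varphi^{(s_2)}}\rangle_\tau$ with $s_1+s_2=2s^*-1$, $s_1,s_2\ge 2$ is present and a priori nonvanishing. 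Likewise, in the even step the off-diagonal terms with $s_1\ne s_2$, $s_1+s_2=2s^*$ do \emph{not} vanish by hypothesis. Your entire strategy collapses at this first move.

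What the paper does in the odd case is subtler: from the \emph{diagonal} vanishing $\langle\varphi^{(s_1)},\overline{\varphi^{(s_1)}}\rangle_\tau\equiv 0$ for $2\le s_1\le s^*-1$ (this \emph{is} covered, since $2s_1\le 2(s^*-1)$), D'Angelo's lemma shows that the last $N-n-\tau$ components of each such $\varphi^{(s_1)}$ are constant linear combinations of the first $\tau$. Since $2s^*-1$ is odd, $\min(s_1,s_2)\le s^*-1$, so every cross term can be rewritten as a sum of only $\tau\le n-2$ products; this places the right-hand side in the class $\tilde{\mathcal S}_{n-2}$, and Theorem \ref{mainEHZ2} then yields $f^{(2s^*-2)}\equiv 0$, $g^{(2s^*-1)}\equiv 0$, and the required $\varphi$-vanishing. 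The even step $t=2s^*$ is considerably harder, because the diagonal term with $s_1=s_2=s^*$ cannot be reduced this way. The paper first shows (Claim \ref{mainclaim}) that $f^{(2s^*-1)}(z,w)=a^{(1)}(z)w^{s^*-1}$ and $g^{(2s^*)}(z,w)=d^{(0)}w^{s^*}$ with $a^{(1)}_j(z)=d^{(0)}z_j$; it then invokes the \emph{moving point trick} (Lemma \ref{mainlem2}), applying this same conclusion to the re-centered normalized maps $F^{**}_p$ and reading off the lowest-order dependence on $p$, to force $a^{(1)}\equiv 0$ and $d^{(0)}=0$; and only then runs a separate nested induction (Claim \ref{mainclaim2}), using Lemma \ref{LemmaII} at the diagonal and Corollary \ref{CorI} off it, to kill all the $\varphi$-cross-terms. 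None of this machinery is present in your sketch, and it cannot be bypassed once the initial simplification you claim is seen to be false.
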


We point out that we have already proved that \eqref{induc} holds for all $3\leq t\leq 4$. Hence, once the proposition has been proved, we conclude by induction that \eqref{induc} holds for all $t \geq 3$. For the proof of Proposition \ref{mainprop}, we shall need some notation and results from \cite{EHZ2}.

Given a real-valued power series $A(z,\bar z,w,\bar w)$, we shall use the expansion
\begin{equation}\Label{expand-s}
A(z,\bar z,w,\bar w)=\sum_{\mu,\nu,\gamma,\delta}
A_{\mu\nu\gamma\delta}(z,\bar z)w^\gamma \bar w^\delta,
\quad (z,w)\in\bC^{n-1}\times\bC,
\end{equation}
where $A_{\mu\nu\gamma\delta}(z,\bar z)$
is a bihomogeneous polynomial in $(z,\bar z)$ of bidegree $(\mu,\nu)$
for every $(\mu,\nu,\gamma,\delta)\in \mathbb Z_+^4$. We recall from \cite{EHZ2} that $A(z,\bar z,w,\bar w)$ is said to belong to the class $\tilde {\mathcal S}_k$, where $k$ is a positive integer, if $A$ vanishes at least to order 2 at $0$ and for every $(\mu,\nu,\gamma,\delta)\in \mathbb Z_+^4$ we have
\begin{equation}
A_{\mu\nu\gamma\delta}(z,\bar z)w^\gamma\bar w^\delta=\sum_{j=1}^k p_j(z,w)\overline{q_j(z,w)},
\end{equation}
where the $p_j$ and $q_j$ are homogeneous holomorphic polynomials of the appropriate degrees. We shall use the following result, which is direct consequence of Theorem 2.2 and Lemma 4.2 in \cite{EHZ2}.

\begin{thm}\Label{mainEHZ2} Let $A(z,\bar z,w,\bar w)$ be a real-valued weighted homogeneous polynomial  of degree $s\geq 5$, and assume that $A\in \tilde {\mathcal S}_{n-2}$. If $p(z,w)=(p_1(z,w),\ldots, p_{n-1}(z,w))$ and $q(z,w)$ are weighted homogeneous holomorphic polynomials of degree $s-1$ and $s$, respectively, such that
\begin{equation}\Label{CMeq}
\mathscr{L}(p,q)(z,\bar z,u)=A(z,\bar z,w,\bar w)\big|_{w=u+i\langle z,\bar z\rangle_\ell},
\end{equation}
then $p\equiv 0$, $q\equiv 0$, and $A\equiv 0$.
\end{thm}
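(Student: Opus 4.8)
The plan is to reduce the hypothesis $A\in\tilde{\mathcal S}_{n-2}$ together with the Chern--Moser type equation \eqref{CMeq} to the setting of Lemma \ref{LemmaI} (equivalently Corollary \ref{CorI}), and thereby force all the data to vanish. First I would substitute $w=u+i\langle z,\bar z\rangle_\ell$ on both sides of \eqref{CMeq} and complexify: replace $\bar z$ by an independent variable $\bar\xi$ and $u$ by the corresponding complexified real variable, so that the identity becomes an identity of holomorphic functions in $(z,\bar\xi, u)$ (or equivalently in $(z,\bar\xi,w,\bar w)$ subject to the relation $w-\bar w = 2i\langle z,\bar\xi\rangle_\ell$). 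The operator $\mathscr L(p,q)$ then becomes, after writing $\im$ as $\tfrac{1}{2i}(\,\cdot\, - \overline{\,\cdot\,})$, a sum of a purely holomorphic piece $\tfrac{1}{2i}(q(z,w) - 2i\langle\bar\xi, p(z,w)\rangle_\ell$ evaluated appropriately$)$ and its conjugate-type counterpart. The point is that each side, after this complexification, can be organized as a combination of products $r(z,w)\overline{s(\xi,\bar w)}$; on the right-hand side, the membership $A\in\tilde{\mathcal S}_{n-2}$ says precisely that each bihomogeneous-in-$(z,\bar z)$, monomial-in-$(w,\bar w)$ component is a sum of at most $n-2$ such products.

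The key structural observation I would then exploit is the one used already in this circle of ideas (Huang \cite{Hu1}, Ebenfelt--Huang--Zaitsev \cite{EHZ2}): after substituting $w = u + i\langle z,\bar z\rangle_\ell$ and expanding in $u$, the left-hand side $\mathscr L(p,q)$ contributes, in its top-degree-in-$u$ terms, an expression of the shape $\langle z,\bar\xi\rangle_\ell \cdot(\text{holomorphic})$ plus its conjugate, simply because $q(z,w)$ and $\langle \bar\xi,p(z,w)\rangle_\ell$, when $w$ is replaced by $u+i\langle z,\bar\xi\rangle_\ell$, produce all the $\langle z,\bar\xi\rangle_\ell$-dependence through that substitution. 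So I would peel off, inductively on the power of $u$ (or equivalently argue at the level of the $A_{\mu\nu\gamma\delta}$ with a suitable ordering), the relation expressing $A$'s components as sums of $\le n-2$ rank-one products on one side, against a multiple of $\langle z,\bar\xi\rangle_\ell$ on the other. Since $\langle z,\bar\xi\rangle_\ell$ is a nondegenerate bilinear form in $n-1$ variables and $n-2 < n-1$, Lemma \ref{LemmaI} (applied after fixing the monomial in $u$, $w$, $\bar w$, i.e. to each bihomogeneous slice) applies and kills the coefficient $A(z,\bar\xi)$ of $\langle z,\bar\xi\rangle_\ell$, hence the corresponding slice of $A$; with $A\equiv 0$ in hand, \eqref{CMeq} becomes $\mathscr L(p,q)\equiv 0$, and the standard injectivity of the Chern--Moser operator $\mathscr L$ on weighted homogeneous polynomials of degree $\ge 5$ (this is exactly the content of Theorem 2.2 / Lemma 4.2 of \cite{EHZ2} that is being invoked) yields $p\equiv 0$ and $q\equiv 0$.

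Concretely, the steps in order are: (1) complexify \eqref{CMeq} by replacing $\bar z\leadsto\bar\xi$ and splitting $\im$, obtaining a holomorphic identity; (2) expand both sides according to \eqref{expand-s} in $(w,\bar w)$ with bihomogeneous coefficients in $(z,\bar\xi)$, and use $A\in\tilde{\mathcal S}_{n-2}$ to write each right-hand coefficient as a sum of $\le n-2$ products $p_j\overline{q_j}$; (3) match coefficients and observe that the substitution $w = u+i\langle z,\bar\xi\rangle_\ell$ forces the left side's contribution, slice by slice, to be divisible by $\langle z,\bar\xi\rangle_\ell$ once the lower-order-in-$u$ terms are accounted for; (4) invoke Lemma \ref{LemmaI} (or Corollary \ref{CorI} to handle all the $\langle z,\bar\xi\rangle_\ell$-powers that appear simultaneously) to conclude the relevant coefficient of $\langle z,\bar\xi\rangle_\ell$ vanishes, iterating to get $A\equiv 0$; (5) finally, with $A\equiv0$, cite the injectivity of $\mathscr L$ from \cite{EHZ2} (degree $\ge 5$) to get $p\equiv q\equiv 0$. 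The main obstacle I expect is step (3)--(4): carefully setting up the bookkeeping — the ordering of the bihomogeneous components and the power of $u$ — so that at each stage the $\tilde{\mathcal S}_{n-2}$ data on one side genuinely confronts a nonzero multiple of the $n-1$-variable form $\langle z,\bar\xi\rangle_\ell$ on the other, with the count $n-2 < n-1$ making Lemma \ref{LemmaI} applicable; this is precisely where one must lean on the detailed form of Theorem 2.2 and Lemma 4.2 of \cite{EHZ2} rather than reprove them.
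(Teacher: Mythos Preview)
The paper does not give its own proof of this theorem; it simply records it as ``a direct consequence of Theorem 2.2 and Lemma 4.2 in \cite{EHZ2}.'' Your sketch correctly identifies exactly those ingredients and the mechanism (the $\tilde{\mathcal S}_{n-2}$ rank bound versus the $(n-1)$-variable form $\langle z,\bar\xi\rangle_\ell$ via Lemma \ref{LemmaI}/Corollary \ref{CorI}, followed by injectivity of $\mathscr L$ in degree $\ge 5$), so your approach is essentially the same as the paper's, only more expository.
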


We shall now give the proof of Proposition \ref{mainprop}.

\begin{proof}[Proof of Proposition $ \ref{mainprop}$] We shall first prove that \eqref{induc} holds for $t=2(s^*-1)+1=2s^*-1$ with $s^*\geq 3$. As mentioned above, the hypotheses in the proposition imply that \eqref{A22} holds with $s=t$. We note that the hypotheses also imply that $\langle \varphi^{(s_1)},\overline{\varphi^{(s_1)}}\rangle_\tau\equiv 0$ for $2\leq s_1\leq s^*-1$.  By a lemma of D'Angelo \cite{DA}, we conclude that there are constants $a^{s_1}_{jk}$ such that
\begin{equation}\Label{phiids}
\varphi^{(s_1)}_j\equiv \sum_{k=1}^{\tau}a^{s_1}_{jk}\varphi^{(s_1)}_k\ \
\mbox{for}\ \ j=\tau+1,\cdots,N-n,\ 2\leq s_1\leq s^*-1.
\end{equation}
Now, if $s_1+s_2=2s^*-1$, then $\min(s_1,s_2)\leq s^*-1$. If, say, $s_1=\min(s_1,s_2)\leq s^*-1$ (the case where $s_2=\min(s_1,s_2)$ is completely analogous and left to the reader), then it follows from the identities in \eqref{phiids} that
\begin{equation}
\begin{aligned}
\langle \varphi^{(s_1)},\overline{\varphi^{(s_2)}}\rangle_\tau &\equiv \sum_{j=1}^{N-n}\delta_{j\tau} \varphi^{(s_1)}_j\overline{\varphi_j^{(s_2)}}\\
&\equiv \sum_{j=1}^\tau\varphi^{(s_1)}_j \overline{q^{(s_2)}_j},
\end{aligned}
\end{equation}
where
\begin{equation}
q^{(s_2)}_j(z,w):=\delta_{j\tau}\varphi_j^{(s_2)}(z,w)+\sum_{k=\tau+1}^{N-n}\delta_{k\tau}a^{s_1}_{k j}\varphi_k^{(s_2)}(z,w).
\end{equation}
Since $\ell'<2\ell\leq n-1$ and $\tau=\ell'-\ell$, it follows that $\tau\leq n-2$. We conclude that
$$
A(z,\bar z,w,\bar w):=\sum\limits_{s_1+s_2=2s^*-1}\langle
\varphi^{(s_1)}(z,w),\overline{\varphi^{(s_2)}(z,w)}\rangle_\tau
$$
belongs to $\tilde{\mathcal S}_{n-2}$. It follows from \eqref{A22} and Theorem \ref{mainEHZ2} that $f^{(s-1)}\equiv 0$, $g^{(s)}\equiv 0$ with $s=2s^*-1$, and $A\equiv 0$. By the definition of $A$, we conclude that $\langle
\varphi^{(s_1)},\overline{\varphi^{(s_2)}}\rangle_\tau\equiv 0$ for $s_1+s_2=2s^*-1$. This
completes the proof of Proposition \ref{mainprop} for $t=2(s^*-1)+1=2s^*-1$.

 It remains to prove
\eqref{induc} for $t=2s^*\geq 6$. Complexifying (\ref{A2}) with
$s=2s^{\ast}$, we get
\begin{multline}\Label{A4}
g^{(2s^{\ast})}(z,w)-\overline{g^{(2s^{\ast})}(\xi,\eta)}-2i\left\langle
\-\xi,f^{(2s^{\ast}-1)}(z,w) \right\rangle_\ell-2i\left\langle
z,\overline{f^{(2s^{\ast}-1)}(\xi,\eta)} \right\rangle_\ell\\
= 2i\sum\limits_{k}\left\langle
\varphi^{(k)}(z,w),\overline{\varphi^{(2s^{\ast}-k)}(\xi,\eta)}
\right\rangle_\tau,\ \ \text{{\rm when }} w=\-\eta+2i\langle z,\-\xi
\rangle_{\ell}.
\end{multline}
Let us denote by ${\mathcal {L}}_j=\frac{\partial}{\partial
z_j}+2i\delta_{j\ell}\-\xi_j\frac{\partial}{\partial w}$ for
$j=1,\ldots ,n-1$. Observe that ${\mathcal {L}}_j$ is tangent to the
complex hypersurface defined by $w=\-\eta+2i\langle z,\-\xi
\rangle_{\ell}$. As a first step towards finishing the induction
step for $s=2s^*$ in the proof of Proposition \ref{mainprop}, we
shall prove that
\begin{equation}\Label{mainlem}
f^{(2s^*-2)}(z,w)\equiv 0,\quad g^{(2s^*)}(z,w)\equiv 0.
\end{equation}
We begin by establishing the following preliminary claim.

\begin{claim}\Label{mainclaim} We have
$f_j^{(2s^{\ast}-1)}(z,w) =a_j^{(1)}(z)w^{s^*-1}$,
$g^{(2s^{\ast})}(z,w)=d^{(0)}w^{s^*}$, and
\begin{equation}\Label{rel0}
a^{(1)}_j(z)=d^{(0)}z_j, \end{equation}
for $j=1,\ldots, n-1$.
\end{claim}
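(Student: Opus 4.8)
The strategy is to turn the complexified identity \eqref{A4} into closed formulas for $g^{(2s^*)}$ and $f^{(2s^*-1)}$ and then to bootstrap with the basic lemmas of Section~2. Throughout one works on the complex hypersurface $w=\bar\eta+2i\langle z,\bar\xi\rangle_\ell$, on which \eqref{A4} and all its tangential derivatives hold, and one uses freely the D'Angelo relations \eqref{phiids} for $\varphi^{(s_1)}$, $2\le s_1\le s^*-1$, which are available from the hypothesis of the proposition and the already-proved case $t=2s^*-1$.

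First I would restrict \eqref{A4} to $z=0$. On this slice the constraint becomes $w=\bar\eta$; every holomorphic weighted-homogeneous polynomial of odd weight vanishes at $(0,\bar\eta)$, while one of even weight $2m$ reduces to its $w^m$-coefficient times $\bar\eta^m$. In particular $f^{(2s^*-1)}(0,\bar\eta)=0$, $g^{(2s^*)}(0,\bar\eta)=d^{(0)}\bar\eta^{s^*}$, and $\varphi^{(2m)}(0,\bar\eta)=c_m\bar\eta^m$ with $c_m$ the $w^m$-coefficient of $\varphi^{(2m)}$. Solving \eqref{A4}$\big|_{z=0}$ for $\overline{g^{(2s^*)}(\xi,\eta)}$ and conjugating then gives a closed formula
\begin{equation}\Label{gform}
g^{(2s^*)}(z,w)=\overline{d^{(0)}}\,w^{s^*}+2i\sum_{m=1}^{s^*-1}\big\langle\,\overline{c_m}\,,\,\varphi^{(2s^*-2m)}(z,w)\big\rangle_\tau\,w^{m}.
\end{equation}
To get the corresponding description of $f^{(2s^*-1)}$ I would apply to \eqref{A4} the conjugate tangential vector field ${\mathcal L}^*_j:=\partial/\partial\bar\xi_j-2i\delta_{j\ell}z_j\,\partial/\partial\bar\eta$, which is tangent to the complexified quadric. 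Since ${\mathcal L}^*_j$ kills every function of $(z,w)$ and satisfies ${\mathcal L}^*_j\langle\bar\xi,f^{(2s^*-1)}(z,w)\rangle_\ell=\delta_{j\ell}f^{(2s^*-1)}_j(z,w)$, the term carrying $f^{(2s^*-1)}_j(z,w)$ survives; setting $\bar\xi=0$ afterwards isolates $f^{(2s^*-1)}_j(z,w)$ as $s^*\overline{d^{(0)}}z_jw^{s^*-1}$, plus a linear-in-$z$ multiple of $w^{s^*-1}$ built from $f^{(2s^*-1)}$ itself, plus a sum of products of components of the $\varphi^{(k)}$ with constant vectors times powers of $w$.

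Next I would feed \eqref{gform} and the analogous formula for $f^{(2s^*-1)}$ back into the full complexified identity \eqref{A4}. The genuinely pure-power-of-$w$ contributions combine, using $w-2i\langle z,\bar\xi\rangle_\ell=\bar\eta$ on the hypersurface, into one explicit weighted-homogeneous term; every remaining contribution is a product in which one factor is a component $\varphi^{(k)}_i(z,w)$ (resp.\ $\overline{\varphi^{(k)}_i(\xi,\eta)}$) and the other factor is holomorphic in the complementary variables. Using \eqref{phiids} to rewrite these products and the inequality $\tau=\ell'-\ell\le n-2$, each weighted-homogeneous slice of the resulting identity is a sum of at most $\tau\le n-2$ products $p(z,w)\overline{q(\xi,\eta)}$, so the identity falls under Corollary~\ref{CorI}. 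Applying it forces every $\varphi$-correction in \eqref{gform} and in the $f$-formula to vanish identically, i.e.\ $g^{(2s^*)}(z,w)=\overline{d^{(0)}}w^{s^*}$ and $f^{(2s^*-1)}_j(z,w)=a^{(1)}_j(z)w^{s^*-1}$ with $a^{(1)}_j$ linear in $z$; comparing this with the definition of $d^{(0)}$ as the $w^{s^*}$-coefficient of $g^{(2s^*)}$ forces $d^{(0)}\in\mathbb R$. Finally, substituting these pure forms back into \eqref{A4} and matching the coefficient of $\bar\eta^{s^*-1}$ — which on the right-hand side is again a sum of products, hence a multiple of $\langle z,\bar\xi\rangle_\ell$ by Lemma~\ref{LemmaI} — shows that $\langle\bar\xi,a^{(1)}(z)\rangle_\ell+\langle z,\overline{a^{(1)}}(\bar\xi)\rangle_\ell$ is proportional to $\langle z,\bar\xi\rangle_\ell$, which together with the relations produced in the previous step pins down $a^{(1)}_j(z)=d^{(0)}z_j$.

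The step I expect to be the main obstacle is the ``diagonal'' term $k=s^*$ of the $\varphi$-sum in \eqref{A4}, namely $\langle\varphi^{(s^*)}(z,w),\overline{\varphi^{(s^*)}(\xi,\eta)}\rangle_\tau$. Unlike the other terms it is built out of $\varphi^{(s^*)}$, for which no D'Angelo relation is available (we have not proved $\langle\varphi^{(s^*)},\overline{\varphi^{(s^*)}}\rangle_\tau\equiv0$), so a priori it need not be a sum of at most $n-2$ products and could obstruct the use of Corollary~\ref{CorI}. The resolution I would pursue is that after the restriction to $z=0$ the diagonal term enters \eqref{gform} and the $f$-formula only linearly in $\varphi^{(s^*)}$ (paired with the fixed vector $c_{s^*/2}$, and only when $s^*$ is even, since $\varphi^{(s^*)}(0,\bar\eta)=0$ for $s^*$ odd); when fed back into \eqref{A4} it therefore produces only terms linear in $\varphi^{(s^*)}$, which can be transferred to the left-hand side and so kept away from the quadratic $\varphi$-part. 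Making this rearrangement precise, and checking that the reorganized identity still satisfies the hypotheses of Corollary~\ref{CorI}, is where the real work of the proof lies.
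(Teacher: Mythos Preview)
Your overall architecture—extracting closed formulas for $g^{(2s^*)}$ and $f^{(2s^*-1)}$ by specializing \eqref{A4}, then feeding them back—is reasonable, and your formula \eqref{gform} obtained from the slice $z=0$ is correct. However, the proposal has a genuine gap exactly at the point you flagged as the obstacle, and your proposed resolution does not close it.

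The problem is the diagonal term $\langle\varphi^{(s^*)}(z,w),\overline{\varphi^{(s^*)}(\xi,\eta)}\rangle_\tau$ on the right of \eqref{A4}. When you substitute \eqref{gform} and the analogous $f$-formula back into \eqref{A4}, this quadratic term does not go away: the substitution only introduces additional terms \emph{linear} in $\varphi^{(s^*)}$ (if any), and moving those to the other side leaves the quadratic diagonal term untouched. That term is a priori a sum of $N-n$ products, not $\tau$, and you have no D'Angelo relation for $\varphi^{(s^*)}$ to reduce it. So the hypotheses of Corollary~\ref{CorI} are simply not met, and no rearrangement of linear terms will change that.

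The paper handles this by a different mechanism that you do not invoke at all: Lemma~\ref{LemmaII}. Setting $w=0$, $\bar\eta=-2i\langle z,\bar\xi\rangle_\ell$ in \eqref{A4} and collecting the bidegree $(s^*,s^*)$ terms in $(z,\bar\xi)$ yields
\[
-\overline{d^{(0)}}\,\bar\eta^{s^*}-2i\langle z,\overline{a^{(1)}(\xi)}\rangle_\ell\,\bar\eta^{s^*-1}
-2i\sum_{k=2}^{s^*-1}\langle b_k^{(k)}(z),\overline{b_{s-k}^{(k)}(\xi)}\rangle_\tau\,\bar\eta^{s^*-k}
=2i\langle b_{s^*}^{(s^*)}(z),\overline{b_{s^*}^{(s^*)}(\xi)}\rangle_\tau,
\]
where $b_k^{(k)}(z)=\varphi^{(k)}(z,0)$. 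The left side is divisible by $\langle z,\bar\xi\rangle_\ell$, and now Lemma~\ref{LemmaII} (this is where the hypothesis $\tau<\ell$ is actually used) forces $\langle b_{s^*}^{(s^*)},\overline{b_{s^*}^{(s^*)}}\rangle_\tau\equiv 0$ and produces a D'Angelo relation \eqref{bs*} for the components of $b_{s^*}^{(s^*)}$. Only after this step can Corollary~\ref{CorI} be applied to the remaining equations (in particular to handle the $p=1$ piece of the $\mathcal L_j$-equation). The same computation gives the relation $a^{(1)}_j(z)=d^{(0)}z_j$ directly. Your plan, relying on Corollary~\ref{CorI} alone, cannot replace this use of Lemma~\ref{LemmaII}.
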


\begin{proof} [Proof of Claim $\ref{mainclaim}$] Applying ${\mathcal L}_j$ to (\ref{A4}),
we obtain for $w=\-\eta+2i\langle z,\-\xi \rangle_{\ell}$
\begin{multline}\Label{Leq}
{\mathcal L}_j\left(g^{(2s^{\ast})}(z,w)\right)-2i\left\langle
\-\xi,{\mathcal L}_j(f^{(2s^{\ast}-1)}(z,w))
\right\rangle_\ell-2i\overline{f_j(\xi,\eta)}\delta_{jl}\\=2i\sum\limits_{k}
\left\langle {\mathcal
{L}}_j\varphi^{(k)}(z,w),\overline{\varphi^{(2s^{\ast}-k)}(\xi,\eta)}
\right\rangle_{\tau}.
\end{multline}
Let us write
\begin{equation}\Label{expans}
f_j^{(s-1)}(z,w)=\sum a_j^{(\tau_j)}(z)w^{\tau^j_s},\
\varphi^{(k)}(z,w)=\sum b_k^{(\mu_k)}(z)w^{\mu_k^{\ast}},\
g^{(s)}(z,w) =\sum d^{(j)}(z)w^{n_s^j},
\end{equation}
where the sums run over all indices such that
\begin{equation} \Label{inds1}
\tau_j+2\tau_s^j=s-1,\ \mu_k+2\mu_k^{\ast}=k,\ j+2n_s^j=s.
\end{equation}
Recall that $s=2s^*$.  Letting $w=0,\ \eta=2i\langle \-z,\xi
\rangle$, and collecting terms of degree $K>2$ in $\xi$ and degree
$P$ in $z$ in \eqref{Leq},  we obtain
\begin{multline}\Label{diffeq}
-2i\overline{a_j^{(K-P)}(\xi)}\overline{\eta^{P}}\delta_{j\ell}=
2i\sum_{k=2}^{s-2}\left\langle \varphi^{(k)}_{z_j}(z,0), \sum
\overline{b_{s-k}^{(\mu_{s-k})}(\xi)}\overline{\eta^{\mu^{\ast}_{s-k}}}
\right\rangle_{\tau}\\
 -4\sum_{k'=3}^{s-2}\delta_{j\ell}\bar \xi_j \left\langle
\varphi^{(k')}_w(z,0),\sum \overline{b_{s-k'}^{(\mu_{s-k'})}(\xi)}
\overline{\eta^{\mu^{\ast}_{s-k'}}} \right\rangle_{\tau},
\end{multline}
where the sums inside $\langle\cdot,\cdot\rangle_\tau$ run over the
indices
\begin{equation}\Label{ind2}
\mu_{s-k}+\mu_{s-k}^{\ast}=K,\quad k-1+\mu_{s-k}^{\ast}=P
\end{equation}
and
\begin{equation}\Label{ind2'}
\mu_{s-k'}+\mu_{s-k'}^{\ast}+1=K,\quad k'-2+\mu_{s-k'}^{\ast}=P
\end{equation}
Taking into account also the fact that
$\mu_{s-k}+2\mu_{s-k}^{\ast}=s-k$ and
$\mu_{s-k'}+2\mu_{s-k'}^{\ast}=s-k'$, we see that \eqref{ind2} and
\eqref{ind2'} have solutions only when
\begin{equation}
K+P+1=s.
\end{equation}
We shall let $K=s^*+p$  and $P=s^*-p-1$. We then get
\begin{equation}
\mu_{s-k}=k+2p,\quad \mu_{s-k}^{\ast}=s^*-p-k
\end{equation}
and
\begin{equation}
\mu_{s-k'}=k'+2p-2,\quad \mu_{s-k'}^{\ast}=s^*-p-k'+1.
\end{equation}
Since both $\mu_{s-k}^{\ast}$ and $\mu_{s-k'}^{\ast}$ must be
non-negative, we can rewrite \eqref{diffeq} as follows
\begin{multline}\Label{diffeq1}
-2i\overline{a_j^{(2p+1)}(\xi)}\overline{\eta^{s^*-p-1}}\delta_{j\ell}=
2i\sum_{k=2}^{s^*-p}\left\langle \varphi^{(k)}_{z_j}(z,0),
\overline{b_{s-k}^{(k+2p)}(\xi)}
\right\rangle_{\tau}\overline{\eta^{s^*-p-k}}\\
 -4\sum_{k'=3}^{s^*+1-p}\delta_{j\ell}\bar \xi_j \left\langle
\varphi^{(k')}_w(z,0),\overline{b_{s-k'}^{(k+2p-2)}(\xi)}
\right\rangle_{\tau}\overline{\eta^{s^*+1-p-k}},
\end{multline}
which can be rewritten as
\begin{multline}\Label{diffeq2}
-2i\overline{a_j^{(2p+1)}(\xi)}\overline{\eta^{s^*-p-1}}\delta_{j\ell}=\\
\sum_{k=2}^{s^*-p}\left(2i\left\langle \varphi^{(k)}_{z_j}(z,0),
\overline{b_{s-k}^{(k+2p)}(\xi)} \right\rangle_{\tau}
 -4\delta_{j\ell}\bar \xi_j \left\langle
\varphi^{(k+1)}_w(z,0),\overline{b_{s-k-1}^{(k+2p-1)}(\xi)}
\right\rangle_{\tau}\right)\overline{\eta^{s^*-p-k}},
\end{multline}
This equation is valid for all $p=0,1,\ldots, s^*-1$ and the sum on
the right hand side of \eqref{diffeq2} is void when $p=s^*-1$. When
$q\leq s^*-1$ we can use \eqref{phiids} to rewrite
\begin{equation}\Label{rewrite}
\begin{aligned}
\left\langle \varphi^{(q)}_{z_j}(z,0),
\overline{b_{s-q}^{(q+2p)}(\xi)} \right\rangle_{\tau}
&=\sum_{i=1}^{N-n}\delta_{i\tau} \varphi^{(q)}_{i,z_j}(z,0)
\overline{b_{i,s-q}^{(q+2p)}(\xi)}\\
&=\sum_{i=1}^\tau \varphi^{(q)}_{i,z_j}(z,0) \overline{c_i(\xi)},
\end{aligned}
\end{equation}
where
\begin{equation}
c_i(\xi):=\delta_{i\tau}b_{i,s-q}^{(q+2p)}(\xi)+\sum_{m=\tau+1}^{N-n}\delta_{m\tau}
a^{q}_{m i}b_{m,s-q}^{(q+2p)}(\xi).
\end{equation}
We can make a similar substitution in $\langle
\varphi^{(q)}_w(z,0),\overline{b_{s-q}^{(q+2p-2)}(\xi)}
\rangle_{\tau}$, again for $q\leq s^*-1$. Hence, if $2\leq p\leq
s^*-1$, then we conclude by Corollary \ref{CorI}, since $2\tau\leq
n-2$, that
\begin{equation}\Label{ajp}
a_j^{(2p+1)}(z)\equiv 0,\quad j=1,\ldots, n-1,\ p=2,\ldots s^*-1.
\end{equation}
When $p=1$, the equation \eqref{diffeq2} can be written as
\begin{multline}\Label{diffeqp1}
-2i\overline{a_j^{(3)}(\xi)}\overline{\eta^{s^*}}\delta_{j\ell}=\\
\sum_{k=2}^{s^*-2}\left(2i\left\langle \varphi^{(k)}_{z_j}(z,0),
\overline{b_{s-k}^{(k+2)}(\xi)} \right\rangle_{\tau}
 -4\delta_{j\ell}\bar \xi_j \left\langle
\varphi^{(k+1)}_w(z,0),\overline{b_{s-k-1}^{(k+1)}(\xi)}
\right\rangle_{\tau}\right)\overline{\eta^{s^*-1-k}}+\\
2i\left\langle \varphi^{(s^*-1)}_{z_j}(z,0),
\overline{b_{s^*+1}^{(s^*+1)}(\xi)} \right\rangle_{\tau}
 -4\delta_{j\ell}\bar \xi_j \left\langle
\varphi^{(s^*)}_w(z,0),\overline{b_{s^*}^{(s^*)}(\xi)}
\right\rangle_{\tau}
\end{multline}
We now turn to the equation \eqref{A4} in which we set $w=0$ and
$\eta=-2i\langle \bar z,\xi\rangle_{\ell}$. Collecting terms of
degree $s^*$ in $z$ and $s^*$ in $\xi$, we obtain
\begin{equation}
-\overline{d^{(0)}\eta^{s^{*}}}-2i\left \langle z,
\overline{a^{(1)}(\xi)}\right \rangle_{\ell}\,
\overline{\eta^{s^*-1}}=2i\sum_{k=2}^{s^*}\left \langle
\varphi^{(k)}(z,0),\overline{b^{(k)}_{s-k}(\xi)}\right \rangle_\tau
\overline{\eta^{s^*-k}}.
\end{equation}
This can be rewritten as
\begin{multline}
-\overline{d^{(0)}\eta^{s^{*}}}-2i\left \langle z,
\overline{a^{(1)}(\xi)}\right \rangle_{\ell}\,
\overline{\eta^{s^*-1}}-2i\sum_{k=2}^{s^*-1}\left \langle
b_k^{(k)}(z),\overline{b^{(k)}_{s-k}(\xi)}\right \rangle_\tau
\overline{\eta^{s^*-k}}= \\2i\left \langle
b^{(s^*)}_{s^*}(z),\overline{b^{(s^*)}_{s^*}(\xi)}\right
\rangle_\tau
\end{multline}
Since the left hand side is divisible by $\eta$ and $\tau<\ell$, it
follows from Lemma \ref{LemmaII} that
\begin{equation}\Label{bs*0}
\left \langle
b^{(s^*)}_{s^*}(z),\overline{b^{(s^*)}_{s^*}(\xi)}\right
\rangle_\tau\equiv 0,
\end{equation}
and there are constants $c^{s^*}_{jk}$ such that
\begin{equation}\Label{bs*}
b^{(s^*)}_{js^*}(z)=\sum_{k=1}^\tau
c^{s^*}_{jk}b^{(s^*)}_{ks^*}(z),\quad j=\tau+1,\ldots, N-n,
\end{equation}
where $b^{(s^*)}_{js^*}(z)$ is the $j$th component of the vector
valued function $b^{(s^*)}_{s^*}(z)$. By using Lemma \ref{LemmaI}
repeatedly (as in the proof of Corollary \ref{CorI}), we also
conclude that
\begin{equation}\Label{bkk}
\left \langle b_k^{(k)}(z),\overline{b^{(k)}_{s-k}(\xi)}\right
\rangle_\tau=0, \quad k=2,\ldots, s^*-1,
\end{equation}
and
\begin{equation}\Label{d00}
\overline{d^{(0)}\, \eta}=2i\left \langle z,
\overline{a^{(1)}(\xi)}\right \rangle_{\ell}, \end{equation}
Equation \eqref{d00} implies \eqref{rel0}. If we use \eqref{bs*} to
substitute for $b^{(s^*)}_{js^*}(z)$ in \eqref{diffeqp1} as above,
then we conclude, again by Corollary \ref{CorI}, that
\begin{equation}\Label{aj3}
a_j^{(3)}\equiv 0,\quad j=1,\ldots, n-1.
\end{equation}
Thus, the equations \eqref{ajp} and \eqref{aj3} together imply that
$f^{2s^*-1}(z,w)=a^{(1)}(z)w^{s^*-1}$.

To show that $g^{(2s^{\ast})}(z,w)=d^{(0)}w^{s^*}$, we go back to
\eqref{A4} in which we again set $w=0$ and $\eta=-2i\langle \bar
z,\xi\rangle_{\ell}$. Note that the degree of
$\overline{f^{2s^*-1}(\xi,\eta)}$ in $\xi$ is $s^*$ by what we have
already proved. Thus, if we collect terms of degree $K=s^*+p$ in
$\xi$ with $p\geq 1$, then we obtain
\begin{equation}\Label{homoK}
-\overline{d^{(2p)}(\xi)\eta^{s^*-p}}=2i\sum_{k=2}^{s-2}\left\langle
\varphi^{(k)}(z,0),\sum \overline{b_{s-k}^{(\mu_{s-k})}(\xi)}
\overline{\eta^{\mu^*_{s-k}}} \right\rangle_{\tau},
\end{equation}
where the sum inside $\langle \cdot,\cdot \rangle_\tau$ runs over
the indices
$$
\mu_{s-k}+\mu^*_{s-k}=s^*+p,\quad \mu_{s-k}+2\mu^*_{s-k}=s-k.
$$
 As
above, equation \eqref{homoK} can be rewritten as
\begin{equation}
-\overline{d^{(2p)}(\xi)\eta^{s^*-p}}=2i\sum_{k=2}^{s^*-p}\left\langle
\varphi^{(k)}(z,0),\overline{b_{s-k}^{(k+2p)}(\xi)}
\right\rangle_{\tau}\overline{\eta^{s^*-p-k}},
\end{equation}
As above, letting $p=1,\ldots, s^*$ (understanding the sum on the
right to be void when $p\geq s^*-2$) and substituting for
$\varphi^{(k)}(z,0)$ using \eqref{phiids}, we conclude that
\begin{equation}
d^{(2p)}(\xi)\equiv 0,\quad p=1,\ldots, s^*.
\end{equation}
This completes the proof of Claim \ref{mainclaim}.
\end{proof}

\begin{rem}{\rm
For future reference, we note that we may combine equations
\eqref{bs*0} and \eqref{bkk} to conclude
\begin{equation}\Label{bkk1}
\left \langle b_k^{(k)}(z),\overline{b^{(k)}_{s-k}(\xi)}\right
\rangle_\tau=0, \quad k=2,\ldots, s^*.
\end{equation}
By complex conjugating and switching the roles of $z$ and $\xi$ in
\eqref{bkk1}, we also obtain
\begin{equation}\Label{bkk2}
\left \langle b_k^{(s-k)}(z),\overline{b^{(s-k)}_{s-k}(\xi)}\right
\rangle_\tau=0, \quad k=s^*,\ldots, s-2.
\end{equation}
}
\end{rem}

 To complete the proof of \eqref{mainlem}, we need to employ the
{\it moving point} trick first  introduced in \cite{Hu1}, and also
crucially used in the later works \cite{Hu2} and \cite{BH}. We first
recall some notation and definitions from \cite{Hu2} and \cite{BH}.

Let $F_p=\tau^F_p \circ F \circ \sigma_p^0$ be as in (3.1) in \cite{
BH} and let
$F_p^{\ast\ast}=(f_p^{\ast\ast},\varphi_p^{\ast\ast},g_p^{\ast\ast})$
be its second normalization at the base point  $p\in M$ (see p. 390
in \cite{BH}). In particular, we have
\begin{align}\Label{bbb}
\left(f_p^{\ast\ast}\right)_j=\frac{(f_p^{\ast})_j-a_j(p)g_p^{\ast}}
{1+2i\left\langle \tilde{f}_p^{\ast},\overline{a(p)}
\right\rangle_{\ell,\ell',n}+(r(p)-i|a(p)|^2_{\ell,\ell',n})g_p^{\ast}},\quad
j=1,\ldots,n-1.
\end{align}
Here, $ {a(p)}=T(\tilde f_p^{\ast})(0)$, which can be written
$$
a_j(p)=\frac{1}{\lambda(p)}\left\langle
T(\tilde{f}),\overline{L_j(\tilde{f})} \right\rangle_{\ell,\ell',n}
\bigg|_p\ \mbox{for}\quad j\leq n-1.
$$
and
$$
a_j(p)=\frac{1}{\lambda(p)}\left\langle
T(\tilde{f}),\overline{D_j(p)}\right\rangle_{\ell,\ell',n}\bigg|_p \
\mbox{for}\quad j \geq n,
$$
where
$$
L_j=\frac{\partial}{\partial z_j}+2i\delta_{jl}\bar
z_j\frac{\partial}{\partial w},\quad j=1,\ldots,n-1,\quad
T=\frac{\partial}{\partial w},
$$
the vectors $D_j(p)$ and the scalar $\lambda(p)$ depend on first
order derivatives of the mapping $F$ at $p$. The reader is referred
to Sections 2 and 3 in \cite{BH}, and also \cite{Hu2}, for details
and notation.

We state here some facts that can be found in \cite{BH}. As
functions of $p\in \mathbb{H}^n_\ell$, $deg_{wt}(a_j(p))\geq 2$ for
$j\leq n-1$ and $deg_{wt}a_j(p)\geq 1$ for $j\geq n$. To emphasize
that the weighted degree is as a function of $p$, we shall write
$a_j(p)=O_{wt,p}(1)$, $j\geq n$, and so on in what follows.
 Also, we have

\begin{align}\Label {A12}
\begin{split}
&|a(p)|^2_{\ell,\ell',n}=\frac{1}{\lambda(p)}|T(\tilde{f})|^2_{\ell,\ell',n}=O_{wt,p}(2),\
(r+i|a(p)|^2_{\ell,\ell',n})=O_{wt,p}(2),\  \l(p)=1+O_{wt,p}(2),\\
&g^{\ast}_{p}=\frac{1}{\lambda(p)}\left(g-2i\left\langle
\tilde{f},\overline{\tilde{f}(p)}\right\rangle_{\ell,\ell',n}\right)
\circ
\sigma_0^p,\\
&\tilde{f}^{\ast}_{p}=(\tilde{f}-\tilde{f}(p)) \circ \sigma_0^p
\cdot
\tilde{A}^{-1},\\
&(f^{\ast}_p)_j=\d_{j,\ell}\left\langle
\frac{1}{\lambda}(\tilde{f}-\tilde{f}(p))\circ
\sigma_0^p,\overline{L_j(\tilde{f})} \right\rangle_{\ell,\ell',n}.
\end{split}
\end{align}
We observe further that $F^{**}_p$ is a normalized map sending the
origin to the origin and an open piece of $\mathbb H^n_\ell$ into
$\mathbb H^N_{\ell,\ell'}$. Hence, the conclusion of Claim
\ref{mainclaim} holds for the components $f^{**}_p$ and $g^{**}_p$
for every $p\in\mathbb H^n_\ell$ near 0. We shall prove the
following lemma, which then completes the proof of \eqref{mainlem}.

\begin{lem}\Label{mainlem2} Suppose that the hypotheses in Proposition 3.1
hold. Assume further that for any $p\in M$, we have
\begin{equation} \label{0000}
f^{\ast\ast}_p(z,w)=z+a_p^{(1)}(z)w^{s^{\ast}-1}+O_{wt}(s),\ \ \
g^{\ast\ast}_p(z,w)=w+d^{(0)}_pw^{s^{\ast}}+O_{wt}(s+1),
\end{equation}
where $s=2s^*$. Then, $a_p^{(1)}(z)\equiv 0$,  $d^{(0)}_p=0$, and hence $$f(z,w)=z+O_{wt}(s),\ g(z,w)=w+O_{wt}(s+1).$$
 \end{lem}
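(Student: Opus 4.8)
The plan is to deploy the moving–point device of Huang \cite{Hu1}, \cite{Hu2}, used also in \cite{BH}. The starting observation is the one recorded just before the statement of Lemma \ref{mainlem2}: for every $p$ in $M$ near $0$ the second–normalized map $F^{**}_p$ is again a normalized map sending an open piece of $\mathbb{H}^n_\ell$ into $\mathbb{H}^N_{\ell,\ell'}$ and still obeys the hypotheses of Proposition \ref{mainprop}, so Claim \ref{mainclaim} applies to it and gives, on top of \eqref{0000}, the relation $a^{(1)}_{p,j}(z)=d^{(0)}_p z_j$ for $j=1,\dots,n-1$. Because the normalization \eqref{norm} makes $a(0)=T(\tilde f)(0)=0$, the construction is trivial at $p=0$, so $F^{**}_0=F$ and $d^{(0)}_0=d^{(0)}$, $a^{(1)}_0=a^{(1)}$. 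Hence it suffices to prove that $d^{(0)}_p\equiv 0$ for all $p$: specializing to $p=0$ then gives $d^{(0)}=0$, whence $a^{(1)}\equiv 0$ by \eqref{rel0} and therefore $f^{(2s^*-1)}\equiv 0$, $g^{(2s^*)}\equiv 0$, i.e.\ $f=z+O_{wt}(s)$ and $g=w+O_{wt}(s+1)$ as claimed.

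To turn the $p$–dependence into information I would substitute the explicit normalization formulas \eqref{bbb}, \eqref{A12} and the Heisenberg translation $\sigma_0^p$ into \eqref{0000} and expand everything as a weighted series, assigning $z(p)$ and $u(p)$ the weights $1$ and $2$. Using the vanishing $f^{(t-1)}\equiv g^{(t)}\equiv 0$ for $3\le t\le 2s^*-1$ already established (together with $g=w+d^{(0)}w^{s^*}+O_{wt}(2s^*+1)$ and $f=z+d^{(0)}zw^{s^*-1}+O_{wt}(2s^*)$ from Claim \ref{mainclaim}), the D'Angelo structure \eqref{phiids} of the $\varphi^{(k)}$ with $k\le s^*-1$, the identities \eqref{bkk1}, \eqref{bkk2}, and the weight bounds $\deg_{wt,p}a_j(p)\ge 2$ (for $j\le n-1$), $\deg_{wt,p}a_j(p)\ge 1$ (for $j\ge n$) from \cite{BH}, one obtains formulas for $a^{(1)}_p(z)$ and for $d^{(0)}_p$ as weighted polynomials in $(z(p),\overline{z(p)},u(p))$ whose weight–zero terms reproduce Claim \ref{mainclaim} at the base point and whose higher–weight terms are built only from $d^{(0)}$ and from $\tau$–pairings of the low–weight components of $\varphi$.

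Next I would impose the constraint $a^{(1)}_{p,j}(z)=d^{(0)}_p z_j$ for all $p$: the weight–$(2s^*-1)$ part of $(f^{**}_p)_j$, as computed from \eqref{bbb}, carries, besides $d^{(0)}_p z_j w^{s^*-1}$, also monomials coming from the $-a_j(p)g^*_p$ term and from the denominator, and demanding that all of these organize into $d^{(0)}_p z_j w^{s^*-1}$ yields, term by term in the $p$–expansion, a system of identities among the $\varphi$–data and $d^{(0)}$. Since $\tau=\ell'-\ell<\ell\le(n-1)/2$, we have $2\tau\le n-2$, which is exactly the range in which Corollary \ref{CorI} and Lemma \ref{LemmaII} apply; running them as in the proof of Claim \ref{mainclaim} I expect to conclude that all the $\tau$–pairings that occur vanish, that $d^{(0)}_p$ does not in fact depend on $p$, and (comparing with complex conjugates) that $d^{(0)}_p$ is real. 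Feeding this back into the weight–$2s^*$ equation for $F^{**}_p$ — which after inserting Claim \ref{mainclaim} reduces, on $w=\bar\eta+2i\langle z,\bar\xi\rangle_\ell$, to $d^{(0)}_p\bar\eta w^{s^*-1}-\overline{d^{(0)}_p}\,w\bar\eta^{s^*-1}=2i\sum_k\langle(\varphi^{**}_p)^{(k)}(z,w),\overline{(\varphi^{**}_p)^{(2s^*-k)}(\xi,\eta)}\rangle_\tau$, whose left side equals $(d^{(0)}_p-\overline{d^{(0)}_p})\bar\eta^{s^*}$ modulo terms divisible by $\langle z,\bar\xi\rangle_\ell$ — and a last application of Lemma \ref{LemmaII} (using $\tau<\ell$) forces $d^{(0)}_p=0$.

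The step I expect to be the real obstacle is the control of the single term $\langle\varphi^{(s^*)},\overline{\varphi^{(s^*)}}\rangle_\tau$ in the weight–$2s^*$ equation \eqref{A4}: a priori it is a sum of $N-n$ rank–one products, so it is not directly covered by Corollary \ref{CorI}, which requires at most $n-2$ terms. This is precisely the reason one cannot argue at a single base point and the moving base point is indispensable — the whole family of equations indexed by $p\in M$, together with the weight estimates on $a_j(p)$, supplies enough additional constraints to pin $d^{(0)}_p$ down even though \eqref{A4} by itself does not. Everything else — the weighted expansions and the repeated use of Lemmas \ref{LemmaI}, \ref{LemmaII} and Corollary \ref{CorI} — is careful but essentially routine bookkeeping.
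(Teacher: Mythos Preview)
Your plan correctly identifies the moving-point mechanism as the key, and your computation of the left-hand side of \eqref{A4} as $d^{(0)}_p\bar\eta w^{s^*-1}-\overline{d^{(0)}_p}\,w\bar\eta^{s^*-1}$ is right. But the closing step has a genuine gap. Once you have reduced to $d^{(0)}_p$ real, that expression is divisible by $w-\bar\eta=2i\langle z,\bar\xi\rangle_\ell$; modulo $\langle z,\bar\xi\rangle_\ell$ the left side is identically zero, so the ``last application of Lemma \ref{LemmaII}'' yields information only about the $\varphi$-pairings, never about the \emph{value} of $d^{(0)}_p$ itself. In other words, equation \eqref{A4} at a single base point cannot detect a real $d^{(0)}$ --- that is exactly why Claim \ref{mainclaim} stops where it does. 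Your preceding step (``I expect to conclude that $d^{(0)}_p$ does not depend on $p$ and is real'') is also only a hope, with no mechanism indicated; and even if granted, the argument does not close.

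The paper's argument is much shorter and bypasses all of this. Instead of comparing the weight-$(2s^*-1)$ part of $(f^{**}_p)_j$ to $d^{(0)}_p z_j w^{s^*-1}$, it extracts the coefficient of the \emph{pure} monomial $w^{s^*-1}$ in \eqref{bbb}. That monomial has weight $2s^*-2<s-1$, so by the hypothesis \eqref{0000} its coefficient on the left is simply $0$. On the right, using the formulas \eqref{A12} and the expansions of $f$ and $\varphi$, the coefficient as a function of $p$ has weight-one part (in $p$) equal, up to a nonzero constant, to
\[
\delta_{j\ell}\,a_j^{(1)}(z_p)\;+\;\big\langle b_{s-2}^{(0)},\overline{\varphi^{(2)}_{z_j}(z_p)}\big\rangle_\tau .
\]
The first summand is holomorphic in $z_p$ and the second is anti-holomorphic; their sum being zero forces each to vanish, hence $a^{(1)}_j\equiv 0$ for the original map, and then $d^{(0)}=0$ by \eqref{rel0}. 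No appeal to Lemma \ref{LemmaII} or Corollary \ref{CorI} is needed at this stage. The point you missed is that dropping one unit of weight --- looking at $w^{s^*-1}$ rather than $z_jw^{s^*-1}$ --- turns the comparison into an equation with $0$ on one side, whose lowest-order $p$-expansion splits by type.
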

We remark that by the hypothesis in Proposition \ref{mainprop} and
what we did above, the assumption in (\ref{0000}) always holds.

\begin{proof}[Proof of Lemma $\ref{mainlem2}$]
Let us identify the coefficients, as functions of $p\in \mathbb
H^n_\ell$, in front of $w^{s^*-1}$ in the Taylor expansion on both
sides of \eqref{bbb} for a fixed $1\leq j\leq n-1$. We note, in view
of the assumption in the lemma, that the coefficient on the left is
0. To find the coefficient on the right, we first consider the
coefficient of $(f^*_p)_j$, which in view of \eqref{A12} equals
\begin{equation}
\frac{1}{\lambda(p)}\left\langle
T^{s^{\ast}-1}\tilde{f}(p),\overline{L_j\tilde{f}(p)}
\right\rangle_{\ell,\ell',n}.
\end{equation}
If we use the following expansion of $\tilde f=(f,\varphi)$,
\begin{equation}
f_j(z,w)=z+a^{(1)}_j(z)w^{s^{\ast}-1}+O_{wt}(s),\
\varphi(z,w)=\varphi^{(2)}(z)+\varphi^{(3)}(z,w)+...+\varphi^{(s-2)}(z,w)+...
\end{equation}
with
\begin{equation}
\varphi^{(s-2)}(z,w)=b^{(0)}_{s-2}w^{s-2}+\sum_{\mu^*_{s-2}=0}^{s-3}b_{s-2}
^{(s-2-2\mu^*_{s-2})}(z)
w^{\mu^*_{s-2}},
\end{equation}
then, with $p=(z_p,w_p)$, we obtain
\begin{equation}\Label{leftcoeff}
\frac{1}{\lambda(p)}\left\langle
T^{s^{\ast}-1}\tilde{f}(p),\overline{L_j\tilde{f}(p)}
\right\rangle_{\ell,\ell',n}=(s^*-1)!\,\left(
\delta_{j\ell}\,a_j^{(1)}(z_p)+\left\langle
b_{s-2}^{(0)},\overline{\varphi_{z_j}^{(2)}(z_p)}
\right\rangle_{\tau}\right )+O_{wt,p}(2).
\end{equation}
Since $a_j(p)=O_{wt,p}(2)$, for $j=1,\ldots, n-1$, and the
denominator of \eqref{bbb} is $1+O_{wt,p}(1)$, we conclude that the
coefficient of $w^{s^*-1}$ on the right hand side of \eqref{bbb}
equals \eqref{leftcoeff} modulo $O_{wt,p}(2)$. The conclusion is
that
\begin{equation}
\delta_{j\ell}\,a_j^{(1)}(z_p)+\left\langle
b_{s-2}^{(0)},\overline{\varphi_{z_j}^{(2)}(z_p)}\right\rangle_{\tau}=0.
\end{equation}
Since this holds for all $j=1,\ldots, n-1$ and all $p\in \mathbb
H^n_\ell$, and hence for all $z_p$ in a neighborhood of $0$, we
conclude that
$$
a_j^{(1)}(z)\equiv 0,\quad j=1,\ldots, n-1.
$$
This also implies that $g(z,w)=w+O_{wt}(s+1)$ in view of
\eqref{rel0}. This completes the proof of Lemma \ref{mainlem2}.
\end{proof}

To complete the induction step in the proof of Proposition
\ref{mainprop}, we must show that
$$
\left\langle
\varphi^{(k)}(z,w),\overline{\varphi^{(s-k)}(\xi,\eta)}\right\rangle_\tau=0,\quad
k=2,\ldots, s-2,
$$
for $s=2s^*$. Using the expansion of $\varphi^{(k)}(z,w)$ in
\eqref{expans}, we observe that it suffices to show that
\begin{equation}\Label{goal}
\left\langle
b_k^{(\mu_k)}(z),\overline{b_{s-k}^{(\mu^*_{s-k})}(\xi)}\right\rangle_\tau=0
\end{equation}
for all indices such that
\begin{equation}\Label{index}
k=2,\ldots, s-2,\quad \mu_k+2\mu^*_{k}=k,\quad
\mu_{s-k}+2\mu^*_{s-k}=s-k.
\end{equation}
For this purpose, let us consider equation \eqref{A4} with $w=\bar
\eta+2i\langle z,\bar \xi\rangle_\ell$ and collect terms of degree
$p$ in $\eta$, $q$ in $z$, and $r$ in $\xi$. Since the left hand
side of \eqref{A4} vanishes by \eqref{mainlem}, we obtain
\begin{equation}\Label{rhs}
\begin{aligned}
\sum_{k=2}^{s-2} \sum
\sum_{j=0}^{\mu^*_k}(2i)^{\mu^*_k-j}\binom{\mu^*_k}{j}\bar\eta^{\mu^*_{s-k}+j}\left\langle
z,\bar \xi\right\rangle_\ell^{\mu^*_k-j} \left\langle
b_k^{(\mu_k)}(z),\overline{b_{s-k}^{(\mu_{s-k})}(\xi)}\right\rangle_\tau=0,
\end{aligned}
\end{equation}
where the notation introduced in \eqref{expans} has been used and as
above $s=2s^*$. The middle sum in \eqref{rhs} ranges over those
indices $(\mu_k,\mu^*_k,\mu_{s-k},\mu^*_{s-k})$ that satisfy the
equations
\begin{equation}
\begin{aligned}
\mu_k+\mu^*_k-j &=q\\
\mu_k+2\mu^*_k &=k\\
\mu^*_k+\mu_{s-k}-j &=r\\
\mu_{s-k}+2\mu^*_{s-k} &=s-k\\
\mu^*_{s-k}+j &=p.
\end{aligned}
\end{equation}
For fixed $2\leq k\leq s-2$, we may view the two last sums in
\eqref{rhs} as ranging over those $\mu_k$, $\mu^*_k$, $\mu_{s-k}$,
$\mu^*_{s-k}$ and $j$ that satisfy the following system of
equations:
\begin{equation}\Label{musystem}
\begin{pmatrix}
1 & 1& 0& 0& -1\\
1 & 2& 0& 0&0 &\\
0& 1& 1& 0& -1\\
0& 0& 1& 2& 0\\
0& 0&0 &1& 1
\end{pmatrix}
\begin{pmatrix}
\mu_k\\ \mu^*_k\\
\mu_{s-k}\\
\mu^*_{s-k}\\j
\end{pmatrix}
=
\begin{pmatrix}
q\\
k\\
r\\
s-k\\
p
\end{pmatrix}
\end{equation}
subject to the constraints
\begin{equation}\Label{constraints}
\mu_k\geq 0,\ \mu^*_k\geq 0,\ \mu_{s-k}\geq 0,\ \mu^*_{s-k}\geq 0,\
0\leq j\leq \mu^*_k.
\end{equation}
A straighforward row reduction shows that \eqref{musystem} is
equivalent to the system
\begin{equation}\Label{musystem2}
\begin{pmatrix}
1 & 1& 0& 0& -1\\
0 & 1& 0& 0&1 &\\
0& 0& 1& 0& -2\\
0& 0& 0& 2& 2\\
0& 0&0 &1& 1
\end{pmatrix}
\begin{pmatrix}
\mu_k\\ \mu^*_k\\
\mu_{s-k}\\
\mu^*_{s-k}\\j
\end{pmatrix}
=
\begin{pmatrix}
q\\
k-q\\
r+q-k\\
s-r-q\\
p
\end{pmatrix}
\end{equation}
We conclude that the system is solvable when
\begin{equation}
2p+q+r=s,
\end{equation}
in which case we may solve for $\mu_k$, $\mu^*_k$, $\mu_{s-k}$, and
$\mu^*_{s-k}$ in terms of $j$ to obtain
\begin{equation}\Label{soln}
\begin{aligned}
\mu_k &=2q+2j-k\\
\mu^*_k &=k-q-j\\
\mu_{s-k} &= r+q+2j-k\\
\mu^*_{s-k} &= p-j.
\end{aligned}
\end{equation}
Let us begin by choosing $p=0$ and, for integral $m$ with
$-(s^*-2)\leq m\leq s^*-2$, choose $q=s^*+m$ and $r=s^*-m$. The
constraints in \eqref{constraints} imply that $j=0$ and $s^*+m\leq
k\leq s+2m$. Note that in this induction step of the proof of
Proposition \ref{mainprop}, only $b^{(\mu_k)}(z)$ with $2\leq k\leq
s-2$ are involved. For simplicity of notation, with $s=2s^*$ fixed,
let us introduce $b_k^{(\mu_k)}(z)$ for any integer $k$ by defining
\begin{equation}\Label{bknot}
b_k^{(\mu_k)}(z)\equiv 0,\ \text{if $k\leq 1$ or $k\geq s-1$}.
\end{equation}
We then conclude from \eqref{A4} and the above that
\begin{equation}\Label{p0qrm}
\begin{aligned}
\sum_{k=s^*+m}^{s+2m} (2i)^{k-s^*-m}\left\langle z,\bar
\xi\right\rangle_\ell^{k-s^*-m} \left\langle
b_k^{(s+2m-k)}(z),\overline{b_{s-k}^{(s-k)}(\xi)}\right\rangle_\tau=0.
\end{aligned}
\end{equation}
Note that with the convention \eqref{bknot}, the sum really ranges
over $k$ from $\max (2,s^*+m)$ and $\min (s-2,s+2m)$. If $m\geq 1$
or $m< -s^*/2$, then no term of the form $b_{s^*}^{(\mu_{s^*})}(z)$
(or its complex conjugate) appears in the sum in \eqref{p0qrm} and
it follows from Corollary \ref{CorI} and \eqref{phiids} (as in the
proof of Claim \ref{mainclaim} above) that
\begin{equation}
\left\langle
b_k^{(s+2m-k)}(z),\overline{b_{s-k}^{(s-k)}(\xi)}\right\rangle_\tau=0,
\quad k=s^*+m,\ldots, s+2m.
\end{equation} With $-s^*/2\leq m\leq 0$, we can rewrite \eqref{p0qrm}
as follows
\begin{multline}\Label{p0qr-1}
\sum_{k=s^*+m}^{s^*-1} (2i)^{k-s^*-m}\left\langle z,\bar
\xi\right\rangle_\ell^{k-s^*-m} \left\langle
b_k^{(s+2m-k)}(z),\overline{b_{s-k}^{(s-k)}(\xi)}\right\rangle_\tau
+\\
(2i)^{-m}\left\langle z,\bar \xi\right\rangle_\ell^{-m}\,
\left\langle
b_{s^*}^{(s^*+2m)}(z),\overline{b_{s^*}^{(s^*)}(\xi)}\right\rangle_\tau\\
+ \sum_{k=s^*+1}^{s+2m} (2i)^{k-s^*-m}\left\langle z,\bar
\xi\right\rangle_\ell^{k-s^*-m} \left\langle
b_k^{(s+2m-k)}(z),\overline{b_{s-k}^{(s-k)}(\xi)}\right\rangle_\tau=0,
\end{multline}
where we understand any sum over an index set such that the upper
limit is strictly less than the lower limit to not be present. By
using \eqref{bs*} to rewrite (as in \eqref{rewrite})
$$ \left\langle
b_{s^*}^{(s^*+2m)}(z),\overline{b_{s^*}^{(s^*)}(\xi)}\right\rangle_\tau
$$
appearing in the middle term in \eqref{p0qr-1}, and \eqref{phiids}
to rewrite the remaining terms, we conclude from Corollary
\ref{CorI} that also in this case
\begin{equation}
\left\langle
b_k^{(s-2-k)}(z),\overline{b_{s-k}^{(s-k)}(\xi)}\right\rangle_\tau=0,
\quad k=s^*+m,\ldots s+2m.
\end{equation}
We can summarize the above, using also the convention \eqref{bknot},
as follows
\begin{equation}\Label{bkkind}
\left\langle
b_k^{(s+2m-k)}(z),\overline{b_{s-k}^{(s-k)}(\xi)}\right\rangle_\tau=0,
\quad s^*+m\leq k\leq s+2m,\ \ -s^*\leq m \leq s^*.
\end{equation}
Now, for general $0\leq p\leq s^*$, $q=s^*+m-p$ and $r=s^*-m-p$, the
equations \eqref{soln} can be rewritten
\begin{equation}\Label{soln2}
\begin{aligned}
\mu_k &=s+2m-2p+2j-k\\
\mu^*_k &=k-s^*-m+p-j\\
\mu_{s-k} &= s-2p+2j-k\\
\mu^*_{s-k} &= p-j.
\end{aligned}
\end{equation}
The constraints in \eqref{constraints} amount to $0\leq j\leq p$ and
\begin{equation}\Label{pconstr2} s^*+m-p+2j\leq k\leq
\min(s-2p+2j,s+2m-2p+2j).
\end{equation}
We can rewrite  \eqref{rhs} as follows
\begin{multline}\Label{rhspm}
\sum_{j=0}^p\sum_{k=s^*+m-p+2j}^{s(+2m)-2p+2j}
(2i)^{k-s^*-m+p-2j}\binom{k-s^*-m+p-j}{j}\bar\eta^{p}\times
\\
\left\langle z,\bar \xi\right\rangle_\ell^{k-s^*-m+p-2j}
\left\langle
b_k^{(s+2m-2p+2j-k)}(z),\overline{b_{s-k}^{(s-2p+2j-k)}(\xi)}\right\rangle_\tau=0,
\end{multline}
where the parenthetical $(+2m)$ in the upper limit in the sum over
$k$ is only present when $m<0$.

\begin{claim}\Label{mainclaim2}
We have, for every $0\leq p\leq s^*$,
\begin{equation}\Label{bkkf}
\left\langle
b_k^{(s+2m-2p-k)}(z),\overline{b_{s-k}^{(s-2p-k)}(\xi)}\right\rangle_\tau=0,
\end{equation}\Label{mless0}
for all \begin{equation} s^*+m-p\leq k\leq s+2m-2p,\ \text{\rm when
$-(s^*-p)\leq m<0$}
\end{equation}
and
\begin{equation}\Label{mgr0} s^*+m-p\leq k\leq s^*-p,\ \text{\rm when
$0\leq m\leq s-2p$},
\end{equation}
where we understand $b_k^{(\mu_k)}(z)\equiv0$ if $k\leq 1$ or
$k\geq s-1$.
 In addition, there
are constants $c^{s^*-2l}_{jk}$ such that
\begin{equation}\Label{bs*l}
b^{(s^*-2l)}_{js^*}(z)=\sum_{k=1}^\tau
c^{s^*-2l}_{jk}b^{(s^*-2l)}_{ks^*}(z),\quad j=\tau+1,\ldots, N-n,\ \
l=0,1,\ldots,\min(p,[s^*/2]),
\end{equation}
where $b^{(s^*-2l)}_{js^*}(z)$ is the $j$th component of the vector
valued function $b^{(s^*-2l)}_{s^*}(z)$ and $[s^*/2]$ denotes the
largest integer $\leq s^*/2$.
\end{claim}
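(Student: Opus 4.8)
The plan is to prove Claim~\ref{mainclaim2} by induction on $p$, running from $p=0$ up to $p=s^*$. The base case $p=0$ is already in hand: \eqref{bkkf} for $p=0$ is exactly \eqref{bkkind}, and \eqref{bs*l} with $l=0$ is \eqref{bs*}. So I would fix $p\ge1$, assume that \eqref{bkkf} and \eqref{bs*l} hold at all levels $0,\dots,p-1$, and extract the level-$p$ information from the family of identities \eqref{rhspm}, one identity for each admissible value of $m$. It is convenient to split the argument into three stages according to the sign of $m$: first the cases $m>0$ (which use only the inductive hypothesis), then the case $m=0$ (which is where the new relation \eqref{bs*l} at $l=p$ is produced, when $p\le[s^*/2]$), and finally the cases $m<0$ (which use the output of the $m=0$ stage). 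Throughout, the available vanishing results \eqref{bkk1} and \eqref{bkk2}, together with \eqref{bknot}, will be used to recognize terms that are already zero.

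The engine in each stage is the same reduction-and-clearing argument already used for Claim~\ref{mainclaim}. In \eqref{rhspm} for a given $m$, every inner product that involves a factor $b_k^{(\mu_k)}$ with $k\le s^*-1$, a factor $b_{s-k}^{(\mu_{s-k})}$ with $s-k\le s^*-1$, or a factor $b_{s^*}^{(s^*-2l)}$ with $l<p$ (or with $l=p$, once the $m=0$ stage is available) can be rewritten, by means of \eqref{phiids} or the appropriate instance of \eqref{bs*l}, as a sum of at most $\tau$ products $h_i(z)\overline{\widetilde h_i(\xi)}$ of holomorphic functions, exactly as in \eqref{rewrite}; all remaining inner products, except the single ``extreme'' (or, for $m=0$, ``diagonal'') one for that value of $m$, are already identically zero by the inductive hypothesis or by the part of \eqref{bkkf}/\eqref{bs*l} already established at level $p$. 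Since $\ell'<2\ell\le n-1$ forces $\tau=\ell'-\ell<\ell$, hence $2\tau\le n-2$, the coefficient of each power of $\langle z,\bar\xi\rangle_\ell$ obtained after this clearing is a sum of at most $n-2$ such products, so Corollary~\ref{CorI} applies and forces the extreme inner product to vanish; running over $m$, and within each $m$ over a secondary downward sweep in the degree in $\xi$ organized as in the proof of Claim~\ref{mainclaim}, then yields \eqref{bkkf} in the stated ranges. In the stage $m=0$ the surviving extreme term is the diagonal one, and after the reducible and known terms have been cleared \eqref{rhspm} collapses to an identity of the shape $\sum_{i=1}^{\tau}h_i(z)\overline{\widetilde h_i(\xi)}=-\langle z,\bar\xi\rangle_\ell^{p}\,\langle b_{s^*}^{(s^*-2p)}(z),\overline{b_{s^*}^{(s^*-2p)}(\xi)}\rangle_\tau$ with $\tau\le n-2$ and $p\ge1$; Lemma~\ref{LemmaI}, applied after factoring out $\langle z,\bar\xi\rangle_\ell$, gives $\langle z,\bar\xi\rangle_\ell^{p-1}\langle b_{s^*}^{(s^*-2p)}(z),\overline{b_{s^*}^{(s^*-2p)}(\xi)}\rangle_\tau\equiv0$, and since $\langle z,\bar\xi\rangle_\ell$ is not a zero divisor in the polynomial ring this forces $\langle b_{s^*}^{(s^*-2p)}(z),\overline{b_{s^*}^{(s^*-2p)}(\xi)}\rangle_\tau\equiv0$; D'Angelo's lemma \cite{DA} applied to the last identity is precisely \eqref{bs*l} with $l=p$ (one may equally invoke Lemma~\ref{LemmaII} here, since $\tau<\ell\le(n-1)/2$). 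When $p>[s^*/2]$ the diagonal term is absent and there is nothing new to prove for \eqref{bs*l}.

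The step I expect to be the main obstacle is the index bookkeeping inside each stage: one has to verify that the constraint system \eqref{musystem2}, for each $(p,m)$, genuinely isolates a single previously-untreated inner product sitting at a unique power of $\langle z,\bar\xi\rangle_\ell$; that the reductions through \eqref{phiids} and \eqref{bs*l} are available for precisely the indices that actually occur in \eqref{rhspm} (in particular, that whenever the critical index $k=s^*$ is present at least one of its two factors is one for which \eqref{bs*l} is already known); and --- the most delicate point --- that after collecting powers of $\langle z,\bar\xi\rangle_\ell$ no coefficient is a sum of more than $n-2$ products, so that Corollary~\ref{CorI} is legitimately applicable. This last point is exactly where the hypothesis $\ell'<2\ell$ enters essentially, and it is the reason the diagonal relations \eqref{bs*l} must be carried along inside the same induction rather than deduced afterwards: they are what keeps the number of products under control once $k=s^*$ appears.
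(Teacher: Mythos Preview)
Your proposal is correct and follows essentially the same route as the paper: induction on $p$, with the base case given by \eqref{bkkind} and \eqref{bs*}, the $m$'s treated in the order $m>0$, $m=0$, $m<0$, reduction of inner products via \eqref{phiids} and \eqref{bs*l}, and Corollary~\ref{CorI}/Lemma~\ref{LemmaII} to finish each stage. One clarification: the paper does not isolate a ``single extreme inner product'' per $(p,m)$ and then run a secondary sweep; rather, it first shows that every term in \eqref{rhspm} with $j\ge1$ already vanishes by the inductive hypothesis (because such a term, with $p=p^*+1$, $m=m^*$, matches an instance of \eqref{bkkf} at level $p'=p^*+1-j\le p^*$ with $m'=m^*$), leaving only the $j=0$ sum \eqref{rhspm2}, in which \emph{every} inner product is reducible to at most $\tau$ products (via \eqref{phiids} when $k\neq s^*$, and via the available case of \eqref{bs*l} when $k=s^*$), so a single application of Corollary~\ref{CorI} kills them all at once; the only genuinely ``extreme'' term is the diagonal one at $m=0$, handled by Lemma~\ref{LemmaII}.
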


\begin{proof}[Proof of Claim $\ref{mainclaim2}$] We shall prove Claim \ref{mainclaim2}
by induction on $p$. We observe that for $p=0$, the equations in
\eqref{bkkf} reduce to \eqref{bkkind}. (Note that
 \eqref{bkkf} is automatic for $k\leq 1$ and $k\geq s-1$ by the convention introduced
 above.) Also, \eqref{bs*l} is just
\eqref{bs*}. Let $p^*<s^*$ and assume that \eqref{bkkf},
 and \eqref{bs*l} hold for all $p\leq p^*$. Consider
the term
\begin{equation}
\left\langle
b_k^{(s+2m-2p+2j-k)}(z),\overline{b_{s-k}^{(s-2p+2j-k)}(\xi)}\right\rangle_\tau
\end{equation}
in the sum \eqref{rhspm} with $p=p^*+1$ and $m=m^*$, i.e.\
\begin{equation}\Label{jgeq1}
\left\langle
b_k^{(s+2m^*-2p^*-2+2j-k)}(z),\overline{b_{s-k}^{(s-2p^*-2+2j-k)}(\xi)}\right\rangle_\tau,
\end{equation}
where $0\leq j\leq p^*+1$ and
\begin{equation}\Label{kpm*}
s^*+m^*-p^*-1+2j\leq k \leq s(+2m^*)-2p^* -2+2j
\end{equation}
where the parenthetical term $(+2m^*)$ is only present when $m^*<0$.
 By the induction hypothesis, this term vanishes if there are
$-(s^*-p)\leq m\leq s^*-p$ and $0\leq p\leq p^*$ such that
\begin{equation}\Label{ineq}
\begin{aligned}
& 2m-2p=2m^*-2p^*-2+2j\\
& 2p=2p^*+2-2j
\end{aligned}
\end{equation}
satisfying \begin{equation}\Label{kpm}
 s^*+m-p\leq k \leq s(+2m) -2p,
 \end{equation}
 where the parenthetical term $(+2m)$ is only present when $m<0$.
 (In what follows, we shall continue to use this convention.)
Solving for $m$ and $p$ in \eqref{ineq}, we obtain
\begin{equation}
m=m^*,\quad p=p^*+1-j.
\end{equation}
The constraint $p\leq p^*$ is satisfied if $j\geq 1$. We note that
$$s^*+m-p=s^*+m^*-p^*-1+j\leq s^*+m^*-p^*-1+2j\leq k.$$ We also have
$s(+2m)-2p=s(+2m^*)-2p^*-2+2j\geq k$. We conclude that the
constraint \eqref{kpm} is satisfied. Hence, the terms \eqref{jgeq1}
that appear in \eqref{rhspm} (with $p=p^*+1$ and $m^*$) vanish for
all $j\geq 1$. Consequently, it follows from \eqref{rhspm}, with
$p=p^*+1$ and $m=m^*$, that
\begin{equation}\Label{rhspm2}
\sum_{k=s^*+m-p^*-1}^{s(+2m)-2p^*-2} (2i)^{k-s^*-m+p^*+1}
\left\langle z,\bar \xi\right\rangle_\ell^{k-s^*-m+p^*+1}
\left\langle
b_k^{(s+2m-2p^*-2-k)}(z),\overline{b_{s-k}^{(s-2p^*-2-k)}(\xi)}\right\rangle_\tau=0.
\end{equation}
Assume first that $p^*\leq s^*/2-1$. Then if $m\geq p^*+2$ or $m\le
-s^*/2+p^*+1/2$, no term involving $b_{s^*}^{(\mu_{s^*})}(z)$
appears in the sum and equation \eqref{rhspm2} implies, by using
Corollary \ref{CorI} and \eqref{phiids} as above, that
\begin{equation}\Label{temp}
\left\langle
b_k^{(s+2m-2p^*-2-k)}(z),\overline{b_{s-k}^{(s-2p^*-2-k)}(\xi)}\right\rangle_\tau=0,\quad
k=s^*+m-p^*-1,\ldots, s(+2m)-2p^*-2.
\end{equation}
 When $-s^*/2+p^*+1/2< m\leq p^*+1$, we can rewrite
\eqref{rhspm2} as follows
\begin{multline}\Label{rhspm3}
 \sum_{k=s^*+m-p^*-1}^{s^*-1} (2i)^{k-s^*-m+p^*+1} \left\langle
z,\bar \xi\right\rangle_\ell^{k-s^*-m+p^*+1} \left\langle
b_k^{(s+2m-2p^*-2-k)}(z),\overline{b_{s-k}^{(s-2p^*-2-k)}(\xi)}\right\rangle_\tau+\\
(2i)^{-m+p^*+1}\left\langle z,\bar \xi\right\rangle_\ell^{-m+p^*+1}
\left\langle
b_{s^*}^{(s^*+2m-2p^*-2)}(z),\overline{b_{s^*}^{(s^*-2p^*-2)}(\xi)}
\right\rangle_\tau+\\
\sum_{k=s^*+1}^{s(+2m)-2p^*-2} (2i)^{k-s^*-m+p^*+1} \left\langle
z,\bar \xi\right\rangle_\ell^{k-s^*-m+p^*+1} \left\langle
b_k^{(s+2m-2p^*-2-k)}(z),\overline{b_{s-k}^{(s-2p^*-2-k)}(\xi)}\right\rangle_\tau=0,
\end{multline}
where we understand any sum over an index set such that the upper
limit is strictly less than the lower limit to not be present. Let
us first consider the case $1\leq m\leq p^*+1$. Note that in this
case $s^*+2m-2p^*-2$ can be written as $s^*-2l$ with $l=p^*+1-m$ and
then $l\leq p^*$, and hence we can use the induction hypothesis
\eqref{bs*l} to rewrite the middle term
$$
\left\langle
b_{s^*}^{(s^*+2m-2p^*-2)}(z),\overline{b_{s^*}^{(s^*-2p^*-2)}(\xi)}
\right\rangle_\tau
$$
as a sum involving only $\tau$ terms (as in \eqref{rewrite}). All
other terms can be similarly rewritten using \eqref{phiids}. We
conclude by Corollary \ref{CorI} that \eqref{temp}, which was shown
above to hold for $m\geq p^*+2$ and $m<-s^*/2+p^*+1$, holds also for
$1\leq m\leq p^*+1$.

Next, we consider $m=0$. In this case, we first use Lemma
\ref{LemmaI} repeatedly (as in the proof of Corollary \ref{CorI}) to
deduce that each term in the first sum in \eqref{rhspm3} vanishes,
i.e.\
\begin{equation}
 \left\langle
b_k^{(s-2p^*-2-k)}(z),\overline{b_{s-k}^{(s-2p^*-2-k)}(\xi)}\right\rangle_\tau=0,\quad
k=s^*-p^*-1,\ldots, s^*-1.
\end{equation}
By canceling a factor $2i\langle z,\bar \xi\rangle^{p^*+1}$, we are
left with
\begin{multline}\Label{rhspm4}
\left\langle
b_{s^*}^{(s^*-2p^*-2)}(z),\overline{b_{s^*}^{(s^*-2p^*-2)}(\xi)}
\right\rangle_\tau+\\
\sum_{k=s^*+1}^{s-2p^*-2} (2i)^{k-s^*} \left\langle z,\bar
\xi\right\rangle_\ell^{k-s^*} \left\langle
b_k^{(s-2p^*-2-k)}(z),\overline{b_{s-k}^{(s-2p^*-2-k)}(\xi)}\right\rangle_\tau=0,
\end{multline}
It follows from Lemma \ref{LemmaII} that
$$\left\langle
b_{s^*}^{(s^*-2p^*-2)}(z),\overline{b_{s^*}^{(s^*-2p^*-2)}(\xi)}
\right\rangle_\tau=0
$$
and
\begin{equation}\Label{bs*p*}
b^{(s^*-2p^*-2)}_{js^*}(z)=\sum_{k=1}^\tau
c^{s^*-2p^*-2}_{jk}b^{(s^*-2p^*-2)}_{ks^*}(z),\quad j=\tau+1,\ldots,
N-n
\end{equation}
for some constants $c^{s^*-2p^*-2}_{jk}$. Note that \eqref{bs*p*}
and the induction hypothesis yield \eqref{bs*l} with $p=p^*+1$. Now,
applying Corollary \ref{CorI} to the remaining equation, we conclude
that \eqref{temp} holds for $m=0$.

Finally, we consider the remaining cases $-s^*/2+p^*+1\leq m\leq
-1$. We can rewrite the terms in the first and last sum using
\eqref{phiids} and the middle term using \eqref{bs*p*}. We conclude,
by Corollary \ref{CorI}, that \eqref{temp} holds for these $m$ as
well. Consequently, we have proved \eqref{bkkf}, for the ranges of
$k$ and $m$ given by \eqref{mless0} and \eqref{mgr0} and $l=p^*+1$,
when $p^*\leq s^*/2-1$.

We now assume $p^*>s^*/2-1$. In this case the upper limit in the sum
over $k$ in \eqref{rhspm2} satisfies $s(+2m)-2p^*-2<s^*$ for all
$m$, since the term $(+2m)$ is only present when $m<0$. Hence, no
term $b^{(\mu_{s^*})}_{s^*}(z)$ will appear in the equation. The
conclusion \eqref{bkkf}, for the ranges of $k$ and $m$ given by
\eqref{mless0} and \eqref{mgr0} and $p=p^*+1$, then follows also in
this case. Note that when $p^*>s^*/2-1$, then $\min
(p^*+1,[s^*/2])=[s^*/2]\leq p^*$ and, hence, \eqref{bs*l} trivially
holds also for $p=p^*+1$ by the induction hypothesis. Claim
\ref{mainclaim2} now follows by induction.
\end{proof}

To prove \eqref{goal} for all indices such that \eqref{index} holds,
it suffices in view of Claim \ref{mainclaim2} to show that for any
integer $2\leq k\leq s-2$ and any integers  $0\leq \mu_k\leq k$,
$0\leq \mu_{s-k}\leq s-k$ of the same parity as $k$, we can find
integers $m$ and $p$ satisfying
\begin{equation}\Label{finaleq}
s+2m-2p-k=\mu_k,\quad s-2p-k=\mu_{s-k}
\end{equation}
such that the constraints
\begin{equation}\Label{constr2}
0\leq p\leq s^*,\quad -(s^*-p)\leq m\leq s^*-p,\quad s^*+m-p\leq
k\leq s(+2m)-2p
\end{equation}
hold, where again the parenthetical term $(+2m)$ is only present
when $m<0$. We shall treat here the case of even $k=2l$, with $1\leq
l\leq s^*-1$. The case of odd $k$ is similar and left to the reader.
We can write $\mu_k=2x$ and $\mu_{s-k}=2y$, where $0\leq x\leq l$
and $0\leq y\leq s^*-l$. The unique solution to \eqref{finaleq} is
then
\begin{equation}
p=s^*-y-l,\quad m=x-y.
\end{equation}
Clearly, we have $0\leq p\leq s^*$. We find that $s^*-p=y+l$. It
follows that $m\leq x\leq l\leq y+l=s^*-p$ and $m\geq -y\geq
-(y+l)=-(s^*-p)$, and hence $m$ satisfies the middle constraints in
\eqref{constr2}. Moreover, we have $s^*+m-p=x+l\leq 2l=k$. When
$m\geq 0$, the upper limit for $k$ in \eqref{constr2} equals
$s(+2m)-2p=s-2p=2(s^*-p)=2y+2l\geq 2l=k$. When $m<0$, the upper
limit for $k$ equals $s(+2m)-2p=2(s^*-p+m)=2x+2l\geq 2l=k$. We
conclude that all constraint in \eqref{constr2} are satisfied. Thus,
we have proved \eqref{goal} for all indices such that the
constraints in \eqref{index} hold. This completes the induction step
for $s=2s^*$ and, hence, the proof of Proposition \ref{mainprop} is
complete.
\end{proof}
\begin{proof}[Proof of Theorem $\ref{main}$ {\rm(a)}] As explained in
the beginning of this section, we may assume that the mapping $F$
satisfies the normalization \eqref{norm}. By induction and
Proposition 3.1, we conclude that
$$f(z,w)\equiv z, \ \ g(z,w)\equiv w,\ \
\left\langle\varphi(z,w),\overline{\varphi(\xi,\eta)}
\right\rangle_\tau\equiv 0,$$
where $\tau=\ell' - \ell$ as above. By a lemma of D'Angelo, there is a
constant $(N-n-\tau)\times(N-n-\tau)$ matrix ${\mathcal U}$ such
that
$$
{\mathcal U}\overline{{\mathcal U}^t}=I_{(N-n-\tau)\times(N-n-\tau)}
$$
and
$$(\varphi_{\tau+1},\cdots,\varphi_{N-n})\mathcal U=(\varphi_1,\cdots,\varphi_{\tau},0,
\ldots,0).
$$
If we let $\gamma$ be the automorphism of $\mathbb H^N_{\ell'}$ given
by
$$
\gamma(z,w):=(z',z'',z'''\mathcal U,w),
$$
where $z=(z',z'',z''')$ with
$$
z'=(z_1,\ldots,z_{n-1}),\quad z''=(z_n,\ldots,z_{n+\tau-1}),\quad
z'''=(z_{n+\tau},\ldots,z_{N-1}).
$$
then $\gamma\circ F$ satisfies the conclusion of Theorem \ref{main}
(a). The proof of Theorem \ref{main} (a) is complete.
\end{proof}

\begin{proof}[Proof of Theorem $\ref{main}$ {\rm (b)}]

We now assume that  $\frac{\p g}{\p w}(0)=\lambda < 0$.
 By counting
the number of  negative and positive eigenvalues in [(2.6), BH], we
similarly see that $(n-1-\ell)\le \ell'$ and $N-1-\ell'\ge \ell.$

Assume that $\ell'<n-1$. Define $\ell^*=n-1-\ell$ and
$\tau^*=\ell'-(n-1-\ell)$. Then we have
 $\tau^*<\ell\le \frac{n-1}{2}$. Notice that Lemma $\ref{LemmaII}$ still holds
when $\ell, \tau$ are replaced by $\ell^*$ and $\tau^*$,
respectively. Indeed, to see this,  we need only to observe that
$A(z,\-{z})|z|_{\ell^*}=-A(z,\-{z}) (-|z_{\ell^*+1}|^2-\cdots
-|z_{n-1}|^2+|z_{1}|^2+\cdots +|z_{\ell^*}|^2)$ with
$\frac{n-1}{2}\ge n-1-\ell^*>\tau^*.$ Certainly Lemma $\ref{LemmaI}$
holds when $\ell$ is replaced by $\ell^*$.

Let
$\sigma^*(z,w)=(z_{\ell^*+1},\cdots,z_{n-1},z_1,\cdots,z_{\ell^*},-w)$
and consider $F^*=F\circ \sigma^*$. Then $F^*$ maps  a small piece
of ${\mathbb H}^{n}_{\ell^*}$ near the origin into ${\mathbb
H}^N_{\ell'}$. Now, although $\ell^*\ge (n-1)/2$, the same argument
as in [BH] still shows that we can still normalize $F^*$ by
composing a certain linear fraction map from ${\mathbb H}^N_{\ell'}$
to ${\mathbb H}^N_{\ell^*,\ell'}$ from the left to get the same
normalization for $F^*$ as in (\ref{norm}) (with $\ell, \tau$ being
replaced by $\ell^*$ and $\tau^*$, respectively). Still write $F^*$
for the normalized $F^*$. Now, the argument in the proof of Theorem
$\ref{main}$  goes through without any change when we replace $\ell,
\tau$ by $\ell^*$ and $\tau^*$, respectively. (Details are left to
the reader.) Hence, one easily see    that in this setting, we have
the statement in Theorem $\ref{main}$ \rm{(b)}. This completes the
proof of Theorem $\ref{main}$.

\end{proof}

\section {Proof of Theorem \ref{main2}}
We present in this section the proof of Theorem \ref{main2}. Our
argument here is partially motivated by the work of Zhang [Zha].

\begin{proof} [Proof of Theorem $\ref{main2}$] As in Section \ref{mainsec} above,
for $p\in \mathbb H^n_\ell$ we let $F_p=(f_p,\phi_p,q_p):=\tau_p^F
\circ F \circ \sigma_0^p$ as in (3.1) in \cite{BH}, where
$\sigma_0^p$ and $\tau_p^F$ are linear automorphisms of the source
and target hyperquadrics such that $F_p(0)=0$. We shall denote by
$M$ a sufficiently small open neighborhood of $0$ in $\mathbb
H^n_{\ell}$. Recall that $F$ is CR transversal to $\mathbb
H^N_{\ell'}$ at $p\in M$ if and only if $\partial
g_p/\partial w(0)\neq 0$. Let $E=\{p\in M: \ F\ \mbox{is not transversal
to $\mathbb H^N_{\ell'}$ at}\ p\}$. The set $E$ is a real-analytic
subvariety near $0$ of $M$. If $E$ contains an open neighborhood of
$0$, then it follows from
Lemma 4.1 in \cite{BH} (see also Theorem 1.1 in \cite{BERtrans}) that $F(U)\subset\mathbb H^N_{\ell'}$, which as mentioned in Remark \ref{gequiv0} implies $g\equiv 0$. Thus, we may assume that the complement $C:=M\setminus E$ is open
and dense. For any $p\in C$, in view of the discussion preceding Theorem \ref{main2}, we can apply  Theorem \ref{main} (a)
or (b), depending on the sign of $\partial g_p/\partial w(0)$, to
conclude that there is a $\tau_p\in Aut_0(\mathbb{H}^N_{\ell'})$
such that
\begin{equation}\Label{tpFp}
(\tau_p \circ F_p)(z,w)=  \left\{
\begin{aligned}
(z_1,\ldots,z_\ell,\psi_p(z,w),z_{\ell+1},\ldots,z_{n-1},\psi_p(z,w),0,\ldots,0
,w)&,\ \text{{\rm if $\frac{\partial g_p}{\partial w}(0)> 0$}}\\
(z_{\ell+1},\ldots,z_{n-1},\psi_p(z,w),z_1,\ldots,z_\ell,\psi_p(z,w),0,\ldots,0
,w)&,\ \text{{\rm if $\frac{\partial g_p}{\partial w}(0)< 0$}}.
\end{aligned}
\right.
\end{equation}
Since $\tau_p\in Aut_0(\mathbb H^N_{\ell'})$, it is of the form
\begin{equation}\Label{taup}
\tau_p(z',w')=\left(\frac{\lambda_p(z'-a_pw')U_p}
{\bigtriangleup_p(z',w')},\frac{\epsilon_p\lambda_p^2w'}
{\bigtriangleup_p(z',w')}\right),
\end{equation}
where $\lambda_p>0$, $\epsilon_p=\pm 1$, $\mathcal
U_pE_{(\ell',N-1)}\overline{\mathcal
U^t_p}=\epsilon_pE_{(\ell',N-1)}$, and $\Delta_p(z',w')$ is a linear
polynomial in $(z',w')$. Note that both mappings on the right hand
side of \eqref{tpFp} are CR transversal (to $\mathbb H^N_{\ell'}$)
at every $p\in \mathbb H^n_{\ell}$. Thus, if $q\in M$ is a point
such that $F_p(q)$ is not on the polar variety of $\tau_p$, i.e.\
$\Delta_p(F_p(q))\neq0$, then $F_p$ is CR transversal at $q$. It
follows that $F$ is CR transversal at $q^*=\sigma^p_0(q)$ and,
hence, $q*\in C$. We conclude that for any $q^*\in E$, the point
$q=(\sigma^p_0)^{-1}(q^*)$ belongs to the polar variety of $\tau_p$.
On the other hand, the last component of $\tau_p\circ F_p$ is
holomorphic near $q$ and, hence, the numerator in the last component
of $\tau_p\circ F_p$ must also vanish at $q$,\ i.e.\
$$
g_p(q)=0.
$$
Now, we have
\begin{align*}
g_p(q)=(g \circ \sigma_0^p)(q)-\overline{g(p)}-2i\langle
(\widetilde{f}\circ\sigma_0^p)(q),\overline{\widetilde{f}(p)}
\rangle_{\ell'}=0,
\end{align*}
or, equivalently,
\begin{align*}
g(q^*)-\overline{g(p)}-2i\langle
\widetilde{f}(q^*),\overline{\widetilde{f}(p)} \rangle_{\ell'}=0.
\end{align*}
or, by complex conjugating the latter,
\begin{equation}\Label{segre}
g(p)-\overline{g(q^*)}-2i\langle
\widetilde{f}(p),\overline{\widetilde{f}(q^*)} \rangle_{\ell'}=0.
\end{equation}
Recall that \eqref{segre} holds for all $q^*\in E$ and all $p\in C$.
Since $C$ is open and dense in $M$, it follows that for each fixed
$q^*\in E$ there is an open neighborhood $U$ of $0$ in $\bC^n$ such
that \eqref{segre} holds for $p\in U$. If we use the notation
$Q'_{q'}$ for the Segre variety of $\mathbb H^N_{\ell'}$ at
$q'=(z'_q,w'_q)$, i.e.\
$$
Q'_{q'}:=\{(z,w)\colon w=\overline{w_q}-2i\left\langle
z',\overline{z'_q}\right \rangle_{\ell'}\},
$$
then \eqref{segre} shows that $F(U)\subset Q'_{F(q^*)}$. In
particular,  if $0\in E$, then $g(z,w)\equiv 0$. This completes the
proof of Theorem \ref{main2}.
\end{proof}

\begin{rem}
Note that in the proof of Theorem \ref{main2} above, we actually
proved that
\begin{equation}
 E=\left\{q^*\in M\colon F(U)\subset Q'_{F(q^*)},\ \text{{\rm for some open neighborhood $U$ of
 $0$ in $
\bC^n$}}\right\}.
\end{equation}
\end{rem}

\section{Examples}

We end this paper with a couple of examples. Our first example shows
that a holomorphic mapping $F$ as in Theorem \ref{main2} can be
non-transversal to $\mathbb H^N_{\ell'}$ at $0$, i.e.\ $\partial
g/\partial w(0)=0$, without sending a full neighborhood $U$ of 0 in
$\bC^n$ into $\mathbb H^N_{\ell'}$, in contrast with the case
$\ell'=\ell$ treated in \cite{BH}. Moreover, the example shows that
there are mappings that are CR transversal to $\mathbb H^N_{\ell'}$
at all points outside a proper real-analytic subvariety of  $\mathbb
H^n_\ell$ without being CR transversal at all points.

\begin{example}\Label{example1}
{\rm Consider the following polynomial mapping $F\colon \bC^5\to
\bC^7$,
\begin{equation}
F(z_1,z_2,z_3,z_4,w):=\left(4z_1z_2,4z_2^2,2z_2(i+w),2z_2(i-w),4z_2z_3,4z_2z_4,0\right).
\end{equation}
It clearly sends 0 to 0. We claim that it also sends $\mathbb H^5_2$
into $\mathbb H^7_3$. Let us write
$$
\rho:=\im w-\left(-|z_1|^2-|z_2|^2+|z_3|^2+|z_4|^2\right)
$$
and
$$
\rho':=\im
w'-\left(-|z'_1|^2-|z'_2|^2-|z'_3|^2+|z'_4|^2+|z'_5|^2+|z'_6|^2\right),
$$
so that $\mathbb H^5_2$ and $\mathbb H^7_3$ are defined by $\rho=0$
and $\rho'=0$, respectively. A straightforward computation, left to
the reader, shows that
$$
\rho'\circ F=4|z_2|^2\rho.
$$
It follows that $F$ does not send a full neighborhood $U$ of $0$ in
$\bC^5$ into $\mathbb H^7_3$ and  $F$ is CR transversal to $\mathbb H^7_3$ precisely
at those $p=(z,w)\in \mathbb H^5_2$ for which $z_2\neq 0$ (see e.g.\ Remark 1.2 in
\cite{BERtrans}). Note that
$\ell'=3<4=2\ell=n-1$. Consequently, both Theorem \ref{main} and \ref{main2} apply. The conclusion of Theorem \ref{main2} is obviously true. Moreover,
at any point where the mapping
$F$ is CR transversal to $\mathbb H^7_3$, it follows from Theorem
\ref{main} that $F$ can be renormalized by composing on the left
with an automorphism of $\mathbb H^7_3$ so as to be of the form
\eqref{normgeq0} or \eqref{normleq0}. (In this particular case, $F$
can be normalized to satisfy either of \eqref{normgeq0} or
\eqref{normleq0} since the signature of $\mathbb H^7_3$ is half its
CR dimension.)}
\end{example}

We conclude this paper by giving an example that shows that Theorems
\ref{main} and \ref{main2} are sharp in the sense that the conclusions fail when the
hypotheses on the signatures $\ell$ and $\ell'$ are not satisfied.

\begin{example}\Label{example2}{\rm
Consider the polynomial mapping $F: \bC^3\to\bC^5$ given by:
$$F(z_1,z_2,w):=\left(z_1+\frac {z_1^2}{2} - {i \over 4} w,z_2-\frac
{z_1z_2}{2},z_1-\frac {z_1^2}{2}+ {i \over 4} w, z_2+\frac {z_1z_2}
{2}z_1w\right). $$ It sends 0 to 0, and we claim that it sends
$\mathbb H^3_1$ to $\mathbb H^5_2$. With the notation
$$ \rho:= \im w -\left(-|z_1|^2+|z_2|^2\right), \ \ \ \rho':=
 \im w' -\left(-|z'_1|^2-|z'_2|^2+|z'_3|^2+|z'_4|^2\right)
$$
for the defining equations of $\mathbb H^3_1$ and $\mathbb H^5_2$,
respectively, we compute that
$$\rho'\circ F=(z_1+\bar
z_1)\rho.
$$
We conclude that $F$ sends $\mathbb H^3_1$ to $\mathbb H^5_2$, as
claimed, and that $F$ is CR transversal to $\mathbb H^5_2$ at $p\in
\mathbb H^3_1$ except on the subvariety of $\mathbb H^3_1$ given by
the intersection with $z_1+\bar z_1=0$. (The transversality at most
points is predicted by Theorem 1.1 of \cite{BERtrans}.) We note that
$\ell'=2=2\ell=n-1$. Also, note the following:

1. The conclusions of Theorem \ref{main} (a) and (b) fail at points where $F$ is CR
transversal to $\mathbb H^5_2$. Indeed, if $F$ could be renormalized
to satisfy either of \eqref{normgeq0} or \eqref{normleq0}, then the
image $F(\bC^3)$ would be contained in a 4-dimensional subspace of
$\bC^5$. It can readily be checked that this is not the case.

2. For any point $p\in\mathbb H^3_1$ at which transversality fails,
there is no open neighborhood $U$ of $p$ in $\bC^3$ such that
$F(U)\subset Q'_{F(p)}$; here, $Q'_{p'}$ denotes the Segre variety
of $\mathbb H^5_2$ at $p'$. Indeed, each Segre variety $Q'_{p'}$ is
a hyperplane in $\bC^5$ and, as above, it can be checked that the
image of $F$ is not contained in any hyperplane. Consequently, the
conclusion of Theorem \ref{main2} also fails. }
\end{example}


\begin{thebibliography}{BER2}

\bibitem [BER1]{BER} M. S. Baouendi, P. Ebenfelt and L. P.
Rothschild:  Real Submanifolds in Complex Space and Their Mappings,
{\it Princeton Math. Ser.} 47, Princeton Univ. Press, Princeton, NJ,
1999


\bibitem[BER2]{BERtrans} M. S. Baouendi, P. Ebenfelt, L. P. Rothschild:
Transversality of holomorphic mappings between real hypersurfaces in
different dimensions. {\it Comm. Anal. Geom.},  15  (2007),  no. 3,
589--611.

\bibitem[BEH]{BEH} M. S. Baouendi, P. Ebenfelt, X. Huang: Super-rigidity
for CR embeddings of real hypersurfaces into hyperquadrics. {\it
Adv. Math.} 219  (2008), 1427-1445.

\bibitem [BH] {BH} M. S. Baouendi and X. Huang,
 Super-rigidity for holomorphic mappings between hyperquadrics with
positive signatures,  {\it J. Diff. Geom.}  69, 379-398, 2005.

\bibitem [CM] {CM} S. S. Chern and J. K. Moser, Real hypersurfaces in complex
manifolds, {\it Acta Math.} 133, (1974), 219-271.

\bibitem [CS] {CS} J. Cima and T. J. Suffridge, A reflection principle with
applications to proper holomorphic mappings, {\it Math Ann. 265}
(1983), 489-500.


\bibitem [DA] {DA} J. P. D'Angelo, Several Complex Variables and the
Geometry of Real Hypersurfaces, CRC Press, Boca Raton, 1993.



\bibitem[EHZ1]{EHZ1}
P. Ebenfelt, X. Huang, D. Zaitsev,  Rigidity of CR-immersions into
spheres. {\em Comm. Anal. Geom.}, {\bf  12} (2004), no. 3, 631--670.

\bibitem[EHZ2]{EHZ2}
P. Ebenfelt, X. Huang, D. Zaitsev,  The equivalence problem and
rigidity for hypersurfaces embedded into hyperquadrics. {\em Amer.
J. Math.}, {\bf 127} (2005),  169--191.



\bibitem[Fa]{Fa} J. Faran, The linearity of proper holomorphic
maps between balls in the low codimension case
, J. Diff. Geom.
\textbf{24}(1986), 15-17.

\bibitem [Fr1] {Fr1} F. Forstneric,  A survey on proper holomorphic mappings,
Proceeding  of Year in SCVs at Mittag-Leffler Institute, Math. Notes
38, Princeton, NJ:
   Princeton University Press , 1992.


\bibitem [Fr2] {Fr2} F. Forstneric,  Extending proper holomorphic mappings of
positive codimension, {\it Invent. Math. 95}, 31-62, 1989.


\bibitem [Ham] {Ham} H. Hamada, Rational proper holomorphic maps from
${\bf B}^n$ into ${\bf B}^{2n}$.  Math. Ann.  331  (2005),  no. 3,
693--711.


 \bibitem [Hu1]{Hu1} X. Huang,  On a linearity problem of proper
 holomorphic mappings between balls in complex spaces of
 different dimensions, J. Diff. Geom.
 \textbf{51}(1999), 13-33.



 \bibitem
[Hu2] {Hu2} X. Huang,
  On a semi-linearity property for holomorphic maps,
  {\it Asian Jour. of Math.} Vol 7 (No. 4), 463-492, 2003. (A special issue in honor of Professor Y-T Siu's 60th birthday).


\bibitem [HJ] {HJ} X. Huang and S. Ji,   On some rigidity problems in Cauchy-Riemann analysis, {\it AMS/IP Studies in Advanced Mathematics}
 Vol. 39, 89-107,
2007.


\bibitem [Mo] {Mok} N. Mok, Metric Rigidity Theorems on Hermitian Locally
Symmetric Manifolds, Series in Pure Mathematics, V. 6, World
Scientific Publishing Co., 1989.






\bibitem [Po] {Po}  H. Poincar\'e, Les fonctions analytiques de deux
variables et la repr\'esentation conforme, {\it  Rend. Circ. Mat.
Palermo}(1907), 185-220.



\bibitem [Zha] {Zha} Y. Zhang, Rigidity and holomorphic Segre transversality for
holomorphic Segre maps, {\it Math. Ann.} Vol. 337 (2) , 457-478,
2007.

\bibitem [We] {We} S.  Webster, On mapping an (n+1)-ball into the complex
space, {\it Pac. J. Math. 81}(1979), 267-272.

\end{thebibliography}
\end{document}